\documentclass[11pt,a4paper]{article}

\usepackage[T1]{fontenc}
\usepackage[utf8]{inputenc}
\usepackage{lmodern}
\usepackage{microtype}
\usepackage{geometry}
\usepackage{mathtools,amssymb,amsthm}
\usepackage{enumitem}
\usepackage{xcolor}
\usepackage{hyperref}

\setlist{itemsep=0.25em,topsep=0.5em}
\hypersetup{
  pdftitle={Local classification of chsc K\"ahler metrics with cone singularities},
  pdfauthor={Martin de Borbon},
  colorlinks=true,
  linkcolor=blue!45!black,
  citecolor=blue!45!black,
  urlcolor=blue!45!black
}

\numberwithin{equation}{section}

\newtheorem{theorem}{Theorem}[section]
\newtheorem{proposition}[theorem]{Proposition}
\newtheorem{lemma}[theorem]{Lemma}
\newtheorem{corollary}[theorem]{Corollary}
\theoremstyle{definition}
\newtheorem{definition}[theorem]{Definition}
\theoremstyle{remark}
\newtheorem{remark}[theorem]{Remark}

\newcommand{\C}{\mathbb{C}}
\newcommand{\N}{\mathbb{N}}
\newcommand{\R}{\mathbb{R}}
\newcommand{\Z}{\mathbb{Z}}
\newcommand{\cO}{\mathcal{O}}
\newcommand{\End}{\operatorname{End}}
\newcommand{\Res}{\operatorname{Res}}
\newcommand{\tr}{\operatorname{tr}}
\newcommand{\Ric}{\operatorname{Ric}}
\newcommand{\Id}{\operatorname{Id}}
\newcommand{\Span}{\operatorname{span}}
\newcommand{\ii}{\mathrm{i}}

\title{Local classification of chsc K\"ahler metrics\\ with cone singularities}
\author{Martin de Borbon}
\date{}

\begin{document}

\maketitle

\begin{abstract}
Let \(B\subset\C^n\) be a ball centred at the origin and \(D=\{z^1=0\}\).  Let \(g\) be a K\"ahler
metric of constant holomorphic sectional curvature (chsc) on \(B\setminus D\), uniformly
equivalent to the model cone metric of angle \(2\pi\beta\), with \(0<\beta<1\), and
polyhomogeneous along \(D\).  We prove that, in suitable holomorphic coordinates \((w^1, \ldots, w^n)\) centred at the origin, the metric \(g\) is the pullback of the corresponding complex space form by
the map \((w^1,w^2,\dots,w^n)\longmapsto
\bigl((w^1)^\beta,w^2,\dots,w^n\bigr)\).
\end{abstract}

\noindent\emph{2020 Mathematics Subject Classification.} 53C55, 32Q15, 32W20, 34M35.

\medskip
\noindent\emph{Key words.} K\"ahler cone metrics, logarithmic connections,
complex space forms.

\section{Introduction}\label{sec:intro}

Let \(B\subset\C^n\) be a ball centred at the origin, with holomorphic coordinates
\(z=(z^1,z')\), and put
\[
  D=\{z^1=0\}\subset B .
\]
Throughout, indices \(i,j,k,\ell\) run from \(1\) to \(n\) and indices \(a,b,c\) run
from \(2\) to \(n\).  Fix \(0<\beta<1\) and set
\begin{equation}\label{eq:model}
  g_\beta=|z^1|^{2\beta-2}|dz^1|^2+\sum_{a=2}^n|dz^a|^2 ,
\end{equation}
the product of the flat two-dimensional cone of total angle \(2\pi\beta\) with a
Euclidean factor.  We denote its K\"ahler form by \(\omega_\beta\), and write
\(g_{\mathrm{Euc}}\) and \(\omega_{\mathrm{Euc}}\) for the Euclidean metric and its
K\"ahler form, respectively.

\begin{definition}\label{def:cone}
A K\"ahler metric \(g\) on \(B\setminus D\) has a \emph{cone singularity of angle
\(2\pi\beta\)} along \(D\) if
\begin{equation}\label{eq:edge-comparison}
  C^{-1}g_\beta\le g\le Cg_\beta
\end{equation}
locally along \(D\) for some \(C\ge1\).
\end{definition}

We abbreviate \emph{constant holomorphic sectional curvature} to \emph{chsc}
throughout, and use the curvature convention
\begin{equation}\label{eq:chsc-convention}
  R_{i\bar jk\bar l}
  =\frac{c}{2}\bigl(g_{i\bar j}g_{k\bar l}+g_{i\bar l}g_{k\bar j}\bigr),
\end{equation}
so that \(g\) is chsc with curvature \(c\); contracting \eqref{eq:chsc-convention}
gives \(\Ric(g)=\tfrac{n+1}{2}c\,g\).  For \(\sigma\in\{1,-1\}\) let \(X_\sigma\) be
\(\C P^n\) if \(\sigma=1\) and the unit ball in \(\C^n\) if \(\sigma=-1\), equipped in
affine coordinates \(Z=(Z^1,\dots,Z^n)\) with the K\"ahler form
\begin{equation}\label{eq:space-form}
  \omega_\sigma
  =\sigma\,\ii\,\partial\bar\partial
    \log\Bigl(1+\sigma\sum_{i=1}^n|Z^i|^2\Bigr),
\end{equation}
which is chsc with curvature \(2\sigma\) in the convention
\eqref{eq:chsc-convention}.  We denote the associated K\"ahler metric by
\(g_\sigma\).

The local classification of chsc K\"ahler metrics is classical and goes
back to Bochner \cite[Theorem~6]{Bochner1947}. In this paper we prove the corresponding statement for metrics with cone singularities. Our main result is the following.

\begin{theorem}[Local classification]\label{thm:main}
Let \(g\) be a chsc K\"ahler metric on \(B\setminus D\), with curvature \(c\),
and cone angle \(2\pi\beta\) along \(D\) for some \(0<\beta<1\). Suppose that \(g\) is polyhomogeneous along \(D\) in the sense of Definition~\ref{def:polyhomogeneous}.  After shrinking \(B\) there are holomorphic coordinates \(w=(w^1,\dots,w^n)\) centred at the origin such that the following holds.
\begin{enumerate}[label=\textup{(\roman*)}]
\item If \(c=0\), then
\begin{equation}\label{eq:main-flat}
  g=|w^1|^{2\beta-2}|dw^1|^2+\sum_{a=2}^n|dw^a|^2 .
\end{equation}
\item If \(c\neq0\), then, with \(\sigma=\operatorname{sgn}c\), the K\"ahler form of
\(g\) is
\begin{equation}\label{eq:main-chsc}
  \omega=\frac{2\sigma}{|c|}\,\ii\,\partial\bar\partial
   \log\Bigl(1+\sigma\bigl(|w^1|^{2\beta}+\textstyle\sum_{a=2}^n|w^a|^2\bigr)\Bigr) .
\end{equation}
\end{enumerate}
\end{theorem}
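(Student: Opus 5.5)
The plan is to realize $g$ through a developing map and then understand how that map behaves along $D$. The keywords point to logarithmic connections: on $B\setminus D$ a chsc metric is locally isometric to the model $X_\sigma$ (after rescaling by $2/|c|$) or to flat $\C^n$. By Bochner's local classification there is a holomorphic developing map $F\colon \widetilde{B\setminus D}\to X_\sigma$ (or $\C^n$) with $g=F^*g_\sigma$, suitably rescaled. Its monodromy $\rho(\gamma)$ around $D$ is an isometry of the model, since $\pi_1(B\setminus D)=\Z$.

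Equivalently, we encode $F$ through a flat holomorphic connection. For $c\ne0$, take the rank $n+1$ bundle $E=\cO\oplus T^{1,0}$ with the connection
\[
\nabla_X(f,Y)=\bigl(Xf-\tfrac{c}{2}g(Y,\bar{\cdot})\ldots,\ D_XY+fX\bigr),
\]
the standard normal projective / Cartan connection. It is flat exactly when $g$ is chsc, and its flat sections give the homogeneous coordinates of $F$. For $c=0$ the analogous connection is $D$ on $T^{1,0}$ together with the affine part.

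The key steps are as follows.

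\begin{enumerate}[label=(\arabic*)]
\item Use polyhomogeneity and the cone comparison \eqref{eq:edge-comparison} to show that, in a frame adapted to $\partial_{z^1}$ weighted by $|z^1|^{\beta-1}$, the connection extends as a logarithmic connection along $D$.
\item Compute its residue $\Res_D\nabla$. Comparing the leading term of the expansion with the model $g_\beta$, we expect eigenvalues $0$ (multiplicity $n$) and $\beta-1$ or $\beta$ (one direction, normal to $D$), up to the convention of the frame.
\item Since $0<\beta<1$, no two eigenvalues differ by a nonzero integer. Hence the logarithmic flat connection is holomorphically gauge equivalent to $d+\Res\,dz^1/z^1$ with constant residue, where the residue is restricted to $D$ and then made constant using flatness and the vanishing of the $dz^a$-components of the curvature.
\item Diagonalize the residue, which is possible because $\beta\notin\Z$ and the eigenvalue $0$ part is semisimple by the metric comparison. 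We then obtain a flat frame of the form $\bigl((z^1)^\beta u_1, u_2,\dots,u_{n+1}\bigr)$ with the $u_i$ holomorphic and independent on $B$.
\end{enumerate}

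Consequently the developing map in homogeneous coordinates has the form $[v_0:\dots:v_n]$ with one component multiplied by $(z^1)^\beta$ and the rest holomorphic. The monodromy is then the unitary rotation $e^{2\pi\ii\beta}$ in one coordinate.

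Next we normalize. Post-compose by an isometry of $X_\sigma$ so that $F(0)=0$ and the special eigendirection becomes $Z^1$. After rescaling, $F=(Z^1,Z')$ with $Z^1=(z^1)^\beta h(z)$ and $Z^a=h_a(z)$, where $h$ and $h_a$ are holomorphic. The cone comparison forces $h(0)\ne0$, and forces $z'\mapsto (h_a)$ restricted to $D$ to be a local biholomorphism. Set
\[
w^1=z^1h^{1/\beta},\qquad w^a=h_a .
\]
These are holomorphic coordinates, and in them $F(w)=((w^1)^\beta,w')$. Pulling back \eqref{eq:space-form}, with the factor $2/|c|$ coming from the curvature normalization, gives \eqref{eq:main-chsc}, and the flat case gives \eqref{eq:main-flat} in the same way.

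The main obstacle is step (1)–(2): showing that the connection, written in a frame adapted to the cone, really has only logarithmic poles with the predicted residue. This requires controlling the Christoffel symbols $g^{-1}\partial g$ of a singular metric. I would first use the polyhomogeneous expansion $g_{1\bar1}\sim |z^1|^{2\beta-2}(a_0+\dots)$, together with the chsc equation viewed as an ODE in the $r$ variable, to kill all expansion terms that would produce worse poles or log terms. If that fails, the fallback is to bound the sections directly: flat sections have growth at most $|z^1|^{-N}$, by the comparison \eqref{eq:edge-comparison} and compactness of the isometry group. This gives regular singularity à la Deligne, and in that case the local exponents would be read off from $g_\beta$.
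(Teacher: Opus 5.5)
There is a genuine gap. Your overall architecture matches the paper's argument for $c\neq0$: a flat connection whose parallel frame gives the developing map, a logarithmic extension with rank-one residue, the nonresonant gauge normal form (Theorem~\ref{thm:gauge}), the developing map $[u_0:(z^1)^\beta v:u_2:\dots:u_n]$, and the coordinates $w^1=z^1\chi^{1/\beta}$, $w^a=u_a/u_0$, with $\chi(0)\neq0$ forced by the volume comparison. The gap is in steps (1)--(2), which you flag as the main obstacle. That is where essentially all the content lies, and neither of your two plans delivers it.

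First, the extension must be logarithmic in a holomorphic frame defined on all of $B$. In the paper this is the coordinate frame $\partial_{z^i}$, or $j(1),j(z^1),\dots,j(z^n)$ on the jet bundle. A frame ``weighted by $|z^1|^{\beta-1}$'' is not holomorphic, so ``logarithmic'' has no meaning there. The weighting has to enter as the multivalued gauge change $S$ to the cone frame; its explicit term $-dS\,S^{-1}$ (equivalently $\Gamma^1_{11}\ni(\beta-1)/z^1$) is what produces the residue entry $\pm(1-\beta)$.

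Second, you are missing the analytic input: the gain $|\partial G|\le\Lambda r^{\varepsilon-1}$ of Lemma~\ref{lem:scale-estimates}. It is supplied directly by the $\theta$-independence of the limit $G_0(z')$ in Definition~\ref{def:polyhomogeneous}(ii), not by an ODE analysis of the chsc equation. Without it you only have the borderline $O(r^{-1})$, and two things fail:
\begin{itemize}
\item the term $\Gamma^b_{a1}=(z^1)^{\beta-1}\widetilde\Gamma^b_{a1}$ is only $O(\rho^{-1})$, so the $dz^a$-components need not extend;
\item one only has $z^1\Gamma^1_{11}=\beta-1+O(1)$, so the residue is not determined.
\end{itemize}
With the gain, all pole orders become strictly subcritical. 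Then the \emph{holomorphicity} of the connection matrix off $D$, which follows from flatness, together with Lemma~\ref{lem:extension} turns these growth bounds into a logarithmic pole. That mechanism is absent from your plan. The same estimates show that only one column (flat case) or one row (jet case) of the residue survives. So the residue has rank one with $R^2=\lambda R$, and that is why it is diagonalizable, not a semisimplicity of ``the eigenvalue-$0$ part''.

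Your fallback cannot close this gap. Moderate growth of flat sections gives regular singularities, hence a logarithmic connection on \emph{some} Deligne lattice. It does not give one on the given bundle, whose holomorphic frame you need in order to build coordinates. Moreover, that argument never uses $\beta<1$. Yet Remark~\ref{rem:beta-two} exhibits, for $\beta=2$, a flat cone metric whose Levi--Civita connection is regular singular but not logarithmic in the coordinate frame. ``Reading off the exponents from $g_\beta$'', and excluding logarithmic terms in the monodromy, again requires the $o(r^{-1})$ control above.

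Finally, for $c\neq0$ your connection on $\cO\oplus T^{1,0}$ cannot be flat. With the trivial connection on the $\cO$-factor and $D$ on $T^{1,0}$, the curvature of $D$ can be compensated only up to a nonzero multiple of $\omega\cdot\Id$. So the connection is at best projectively flat, it is not holomorphic, and Theorem~\ref{thm:gauge} does not apply. To get a flat holomorphic connection you must twist by a Hermitian line bundle with curvature proportional to $\omega$; the paper uses $J^1(\cO_B)$ with $h_\sigma=e^{-\sigma\varphi}$. Its connection matrix then involves $\partial\varphi$ and $\partial^2\varphi$. You therefore also need the potential bounds of Lemma~\ref{lem:scale-estimates}\ref{it:reg-potential}, which is why Definition~\ref{def:polyhomogeneous}(i) asks for a bounded polyhomogeneous potential.
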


\begin{remark}\label{rem:polyhomogeneity-appendix}
The polyhomogeneity hypothesis is automatic under the other hypotheses of the
theorem.  Indeed, a chsc K\"ahler metric is
K\"ahler--Einstein, and Theorem~\ref{thm:appendix-polyhomogeneity} in
Appendix~\ref{app:polyhomogeneity} proves that every K\"ahler--Einstein metric
satisfying \eqref{eq:edge-comparison} is polyhomogeneous in the sense of
Definition~\ref{def:polyhomogeneous}.  We retain polyhomogeneity as an explicit
hypothesis so that the classification statement and its proof are independent of
this regularity result.
\end{remark}

Both cases say the same thing.  Let
\begin{equation}\label{eq:branched-map}
  F(w)=\bigl((w^1)^\beta,w^2,\dots,w^n\bigr).
\end{equation}
Then \eqref{eq:main-chsc} reads
\(\omega=(2/|c|)F^*\omega_\sigma\), and \eqref{eq:main-flat} is
\(g=F^*g_{\mathrm{Euc}}\) after the constant rescaling
\(w^1\mapsto\beta^{-1/\beta}w^1\).

\subsection*{Outline}

Sections~\ref{sec:log-extension}--\ref{sec:classification} treat the flat case.  When
\(c=0\) the Christoffel symbols are holomorphic off \(D\). Theorem~\ref{thm:log-extension} shows that the Levi--Civita connection of \(g\) extends over \(TB\) with a logarithmic pole along \(D\) with residue of rank one and trace \(\beta-1\).
Theorem~\ref{thm:connection-normal-form} provides normal coordinates for flat torsion-free logarithmic connection whose residue has non-integral trace
\(\lambda\).
Section~\ref{sec:classification} integrates \(\nabla g=0\) in these coordinates and
proves Theorem~\ref{thm:main}(i).
Section~\ref{sec:jet} treats \(c\neq0\).  The flat object is now the Chern connection
\(\mathcal D\) of a Hermitian form \(\mathcal H\) on a bundle \(E\) of first jets,
introduced in \cite{deBorbon2025}.  Using this connection, we prove
Theorem~\ref{thm:main}(ii).

\subsection*{Acknowledgements}

I want to thank Dima Panov, from whom I learnt logarithmic connections. Many of the ideas of this paper developed during our collaboration, especially \cite{deBorbonPanov2021, deBorbonPanov2022}.

\subsection*{Use of artificial intelligence}

The main result of this paper, Theorem~\ref{thm:main}, was announced in
\cite{deBorbon2025}.  The strategy of proof is due to the author and goes back to
that paper, written at a time when the author was not using AI tools in his
research.  ChatGPT was used to produce the present article from an earlier draft.
That draft carried polyhomogeneity as a hypothesis.  The argument of
Appendix~\ref{app:polyhomogeneity}, which shows that this hypothesis is automatic,
was largely developed through conversations with ChatGPT.

\section{Polyhomogeneous K\"ahler metrics}\label{sec:regularity}

Write \(\rho=|z^1|\) and \(\theta=\arg z^1\).  On a simply connected sector of \(B\setminus D\) choose a branch of
\begin{equation}\label{eq:cone-coordinate}
  \zeta=\frac{(z^1)^\beta}{\beta},
\end{equation}
so that \((\zeta,z')\) are holomorphic coordinates in which \(g_\beta\) is Euclidean;
we call these \emph{cone coordinates} and set
\begin{equation}\label{eq:r-rho}
  r=|\zeta|=\rho^\beta/\beta .
\end{equation}
Objects carrying a tilde are computed in the cone coordinates, whose indices we label
as those of \(z\), the index \(1\) referring to \(\zeta\).  Two branches of
\eqref{eq:cone-coordinate} differ by a constant phase factor, so all the
estimates below are independent of the choice.  The variables \((r,\theta,z')\) are
those of \cite{JMR2016}, whose conventions for asymptotics we follow.

\begin{definition}[Polyhomogeneous functions; {\cite[Definition~2.6]{JMR2016}}]
\label{def:phg}
For \(\nu\in\R\), let \(\mathcal A^\nu\) be the space of smooth functions \(u\) on
\(B\setminus D\) such that
\[
  Pu=O(r^\nu)
\]
for every finite product \(P\) of the vector fields
\(r\partial_r\), \(\partial_\theta\), \(\partial_{z^a}\), and
\(\partial_{\bar z^a}\).  A function \(u\in\mathcal A^\nu\) is
\emph{polyhomogeneous of order at least \(\nu\)}, written
\(u\in\mathcal A^\nu_{\mathrm{phg}}\), if there are
exponents \(\nu\le\sigma_0<\sigma_1<\cdots\to\infty\), integers \(N_j\ge0\), and
coefficients \(a_{jp}(\theta,z')\), smooth and of period \(2\pi\) in \(\theta\), such
that
\begin{equation}\label{eq:phg-remainder}
  u-\sum_{\sigma_j<N}\;\sum_{p=0}^{N_j}a_{jp}(\theta,z')\,r^{\sigma_j}(\log r)^p
  \ \in\ \mathcal A^N
  \qquad\text{for every }N .
\end{equation}
One writes \(u\sim\sum_{j,p}a_{jp}(\theta,z')r^{\sigma_j}(\log r)^p\).  Thus
\(\sim\) abbreviates \eqref{eq:phg-remainder}; in particular the expansion may be
differentiated term by term in the edge vector fields, with the differentiated
remainder.
\end{definition}

\begin{definition}[Polyhomogeneous K\"ahler metrics]
\label{def:polyhomogeneous}
Let \(g\) be a K\"ahler metric on \(B\setminus D\) satisfying
\eqref{eq:edge-comparison}, and let \(\omega\) be its K\"ahler form.  We say that
\(g\) is \emph{polyhomogeneous along \(D\)} if the following two conditions hold.
\begin{enumerate}[label=\textup{(\roman*)}]
\item There are a function \(\varphi_0\in C^\infty(D,\R)\), a number
\(\delta>0\), and a K\"ahler potential \(\varphi\), bounded and
plurisubharmonic on \(B\) and smooth on \(B\setminus D\), such that
\(\omega=\ii\,\partial\bar\partial\varphi\) on \(B\setminus D\) and
\[
  \varphi-\varphi_0(z')\in\mathcal A^{1+\delta}_{\mathrm{phg}}
\]
in the sense of Definition~\ref{def:phg}.  Equivalently,
\begin{equation}\label{eq:polyhomogeneous}
  \varphi\ \sim\ \varphi_0(z')
   +\sum_{j\ge1}\;\sum_{p=0}^{N_j}a_{jp}(\theta,z')\,r^{\sigma_j}(\log r)^p,
  \qquad
  1<\sigma_1<\sigma_2<\cdots\to\infty .
\end{equation}
\item The matrix \(G\) of \(g\) in the cone coordinates has a
\(\theta\)-independent leading term: for some matrix-valued smooth function
\(G_0(z')\), necessarily Hermitian and positive definite,
\begin{equation}\label{eq:G-limit}
  G\longrightarrow G_0(z')
  \qquad\text{as }r\to0,\text{ uniformly in }\theta .
\end{equation}
\end{enumerate}
\end{definition}

\begin{remark}[Relation with the K\"ahler--Einstein expansion]\label{rem:jmr}
Definition~\ref{def:polyhomogeneous} deliberately records only the two features of
the polyhomogeneous K\"ahler--Einstein expansion that are used below: the potential
has no nonconstant term of order at most \(1\), and the metric in cone coordinates
has a \(\theta\)-independent limit.  The known expansion contains more information.

In \cite{JMR2016} the unknown potential is measured relative to a reference
K\"ahler cone metric.  By \cite[Theorem~2]{JMR2016} its exponents belong to
\[
  \bigl\{j+k/\beta:j,k\in\N\cup\{0\}\bigr\},
\]
and no logarithms occur at order at most \(2\).  The reference potential has an
expansion of the same type (\cite{JMR2016}, proof of Lemma~4.5), and hence so does
the total potential \(\varphi\).  Moreover, \cite[Proposition~4.4]{JMR2016}
identifies the relevant initial terms.  When \(\tfrac12<\beta<1\),
\[
  \varphi\ \sim\ a_{00}(z')
   +\bigl(a_{01}(z')\sin\theta+b_{01}(z')\cos\theta\bigr)r^{1/\beta}
   +a_{20}(z')\,r^{2}+O(r^{2+\epsilon})
  \qquad\text{for some }\epsilon>0.
\]
When \(\beta<\tfrac12\), the term \(a_{20}(z')r^2\) occurs before the term of
order \(r^{1/\beta}\); when \(\beta=\tfrac12\), the two have the same order.

This more precise list of powers is not imposed in
Definition~\ref{def:polyhomogeneous}.  Indeed, the arguments below use only the
two stated conditions.  To see how the precise expansion implies them, first
observe that no term of order \(r\) can occur.  This is already forced by
\eqref{eq:edge-comparison}: its coefficient \(a(\theta)\) would have to satisfy
\(a''=-\beta^2a\), which has no nonzero \(2\pi\)-periodic solution for
\(0<\beta<1\).  Thus the first nonconstant order is strictly greater than \(1\).
Second, the coefficient of \(r^2\), whose normal complex Hessian produces the
leading normal component of \(G\), depends only on \(z'\).  The only lower-order
term that may depend on \(\theta\), namely the term of order \(r^{1/\beta}\), is
pluriharmonic in the normal variable \(\zeta\) and therefore does not affect that
leading component.  This gives \eqref{eq:G-limit}.
\end{remark}

\begin{lemma}[Scale-invariant estimates]\label{lem:scale-estimates}
Let \(g\) be polyhomogeneous along \(D\).  After shrinking \(B\) there are
\(\Lambda\ge1\) and \(\varepsilon\in(0,1)\) such that at every point of
\(B\setminus D\), with \(r\) as in \eqref{eq:r-rho} and all quantities computed in the
cone coordinates,
\begin{enumerate}[label=\textup{(\roman*)}]
\item\label{it:reg-metric}
  \(\Lambda^{-1}\Id\le G\le\Lambda\Id\)
  \quad\text{and}\quad
  \(|\partial G|\le\Lambda\,r^{\varepsilon-1}\);
\item\label{it:reg-potential}
  \(|\varphi|+|\partial\varphi|\le\Lambda\)
  \quad\text{and}\quad
  \(|\partial^2\varphi|\le\Lambda\,r^{\varepsilon-1}\).
\end{enumerate}
\end{lemma}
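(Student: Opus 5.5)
We prove both parts by translating the polyhomogeneous expansion of Definition~\ref{def:polyhomogeneous} into the cone coordinates \((\zeta,z')\) and reading off orders. The key observation is that derivatives in the cone coordinates are expressible through the edge vector fields with explicit powers of \(r\). Since \(\zeta=re^{\ii\beta\theta}\) up to a constant phase,
\[
\partial_\zeta=\tfrac{1}{2}e^{-\ii\beta\theta}\bigl(\partial_r-\tfrac{\ii}{\beta r}\partial_\theta\bigr)
=\tfrac{1}{2r}e^{-\ii\beta\theta}\bigl(r\partial_r-\tfrac{\ii}{\beta}\partial_\theta\bigr),
\]
with the analogous formula for \(\partial_{\bar\zeta}\), while \(\partial_{z^a}\) is unchanged. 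Consequently, if \(u\in\mathcal A^\nu\) then \(\partial_\zeta u,\partial_{\bar\zeta}u\in r^{-1}\mathcal A^\nu\subset\mathcal A^{\nu-1}\) and \(\partial_{z^a}u\in\mathcal A^\nu\). The phase factor \(e^{-\ii\beta\theta}\) is harmless: it is bounded, and edge derivatives of it stay bounded. The \(\theta\)-nonperiodicity plays no role, since we only need pointwise bounds on a sector.

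Part (ii) follows directly. Write \(\varphi=\varphi_0(z')+u\) with \(u\in\mathcal A^{1+\delta}\) after shrinking \(B\); here \(\mathcal A^{1+\delta}_{\mathrm{phg}}\subset\mathcal A^{1+\delta}\) by taking \(N=1+\delta\) in \eqref{eq:phg-remainder} and absorbing the finitely many leading terms, since \(r^{\sigma}(\log r)^p\in\mathcal A^{\sigma-\epsilon'}\). Choose \(\varepsilon\le\delta\), reducing \(\delta\) slightly to absorb the logarithms. Then \(\varphi\) is bounded, and \(\partial\varphi\) is bounded because \(\partial\varphi_0\) is smooth and \(\partial u\in\mathcal A^{\delta}\). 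For second derivatives, \(\partial^2u\in\mathcal A^{\delta-1}\subset O(r^{\varepsilon-1})\), and \(\partial_{z^a}\partial_{z^b}\varphi_0\) is bounded. This gives \(|\partial^2\varphi|\le\Lambda r^{\varepsilon-1}\).

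For part (i), the entries of \(G\) are \(\partial_i\partial_{\bar j}\varphi\) in cone coordinates. Hence \(G=\partial\bar\partial\varphi_0+\partial\bar\partial u\), and so \(G\in\mathcal A^{\delta-1}\) at worst, with \(\partial G\in\mathcal A^{\delta-2}\). That estimate is too weak, so here we use condition (ii) of the definition together with \eqref{eq:edge-comparison}. The bounds \(\Lambda^{-1}\Id\le G\le\Lambda\Id\) are exactly \eqref{eq:edge-comparison} rewritten in cone coordinates, because \(g_\beta\) is Euclidean there, so this part is immediate.

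\textbf{Main obstacle.} The difficulty is \(|\partial G|\le\Lambda r^{\varepsilon-1}\). We plan to argue via polyhomogeneity of \(G\) itself. Each entry of \(G\) is a finite sum of terms of the form (bounded phase factor)\(\times r^{-k}P u\), with \(k\le2\) and \(P\) an edge operator, so it is polyhomogeneous with some expansion \(\sum b_{jp}(\theta,z')r^{\tau_j}(\log r)^p\) in which phase factors \(e^{\pm\ii\beta\theta}\) are allowed in the coefficients. The entry is bounded by the first step. Moreover \eqref{eq:G-limit} says that all terms with \(\tau_j<0\), and all \(\log\) terms at \(\tau=0\), vanish, and that the \(\tau=0\) coefficient is \(G_0(z')\), independent of \(\theta\). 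The next exponent \(\tau_1\) is strictly positive; set \(\varepsilon=\min(\tau_1,\delta)/2\). Then \(G-G_0(z')\in\mathcal A^{2\varepsilon}\). Applying the derivative rule above, \(\partial_\zeta(G-G_0)\in\mathcal A^{2\varepsilon-1}\) and \(\partial_{z^a}(G-G_0)\in\mathcal A^{2\varepsilon}\), while \(\partial_\zeta G_0=0\) and \(\partial_{z^a}G_0\) is bounded. This yields \(|\partial G|\le\Lambda r^{\varepsilon-1}\).

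The delicate point is justifying that the leading term is exactly \(\theta\)-independent with no \(\log\) and no negative powers, and that the vanishing of those terms is uniform. We would handle this through the uniqueness of polyhomogeneous expansions: a uniform limit as \(r\to0\) forces every coefficient with \(\tau_j<0\), or with \(\tau_j=0\) and \(p\ge1\), to vanish, by comparing growth rates term by term. The same comparison identifies the \(\tau=0\) coefficient with \(G_0\).
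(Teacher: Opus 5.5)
Your proposal is correct and follows essentially the same route as the paper. You write \(\partial_\zeta\) as \(r^{-1}\) times an edge operator and read (ii) off the expansion of \(\varphi-\varphi_0(z')\). For (i) you use that \(G\) is, up to phase factors, polyhomogeneous with no negative exponents and a \(\theta\)-independent leading coefficient \(G_0(z')\), so that \(G-G_0\) and its edge derivatives are \(O(r^\mu)\), which gives \(\partial_\zeta G=O(r^{\mu-1})\). The only adjustments needed are cosmetic: cap \(\varepsilon\) (e.g.\ by \(\tfrac12\), as the paper does) so that \(\varepsilon\in(0,1)\), and take \(2\varepsilon\) strictly below \(\tau_1\) in case logarithms occur at that exponent.
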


\begin{proof}
Write
\begin{equation}\label{eq:d-zeta-edge}
  \partial_\zeta=\frac{e^{-\ii\beta\theta}}{2r}\,L,
  \qquad
  L=r\partial_r-\frac{\ii}{\beta}\,\partial_\theta ,
\end{equation}
so that a \(\zeta\)-derivative is \(r^{-1}\) times an edge operator, whereas
\(\partial_{z^a}\) is itself one.  The two-sided bound on \(G\) is
\eqref{eq:edge-comparison}, and the bound on \(\varphi\) is its boundedness.

Fix \(\sigma'\in(1,\sigma_1)\).  Since \(r^\sigma(\log r)^p=O(r^{\sigma'})\) for
\(\sigma>\sigma'\), \eqref{eq:phg-remainder} makes \(\varphi-\varphi_0(z')\) and each
of its iterated edge derivatives \(O(r^{\sigma'})\); as \(L\varphi_0=0\), the same
bound holds for \(L\varphi\) and for its further edge derivatives.  Next, up to a
phase factor \(e^{-ik\beta\theta}\) which is invisible to the bounds below, \(G\) is
polyhomogeneous, being obtained from \(\varphi\) by two applications of
\eqref{eq:d-zeta-edge} and \(\partial_{z^a}\); by \eqref{eq:edge-comparison} its
expansion carries no term of negative exponent and no logarithm at exponent \(0\),
and by \eqref{eq:G-limit} its coefficient at exponent \(0\) is \(G_0(z')\).  Hence
\(G-G_0(z')\) and its iterated edge derivatives are \(O(r^\mu)\) for some
\(\mu>0\), and \(LG=O(r^\mu)\).

Put \(\varepsilon=\min\{\tfrac12,\ \sigma'-1,\ \mu\}\).  Tangential derivatives are
harmless: \(\partial_{z^a}\varphi\), \(\partial_{z^a}\partial_{z^b}\varphi\) and
\(\partial_{z^a}G\) are \(O(1)\).  By \eqref{eq:d-zeta-edge},
\[
  \partial_\zeta\varphi=O(r^{\sigma'-1}),
  \qquad
  \partial_\zeta\partial\varphi=O(r^{\sigma'-2}),
  \qquad
  \partial_\zeta G=O(r^{\mu-1}),
\]
the first \(O(1)\) and the other two \(O(r^{\varepsilon-1})\).  Since \(r\) is
bounded, the quantities that are \(O(1)\) are also \(O(r^{\varepsilon-1})\).
\end{proof}

From now on, polyhomogeneity is used only through
Lemma~\ref{lem:scale-estimates}; no individual term of the expansion
\eqref{eq:polyhomogeneous} is used again.  In the flat case only
\ref{it:reg-metric} is needed, whereas the jet-bundle argument in
Section~\ref{sec:jet} uses both \ref{it:reg-metric} and
\ref{it:reg-potential}.  By
\eqref{eq:chsc-convention} a chsc metric has constant Ricci curvature, so the
hypotheses of Theorem~\ref{thm:main} are those of Lemma~\ref{lem:scale-estimates}.

\subsection{Christoffel-symbol estimates}

For a K\"ahler metric the Christoffel symbols in holomorphic coordinates are the
entries of \(G^{-1}\partial G\), so Lemma~\ref{lem:scale-estimates}\ref{it:reg-metric}
and \eqref{eq:r-rho} give
\begin{equation}\label{eq:cone-christoffel}
  \widetilde\Gamma^k_{ij}=O(r^{\varepsilon-1})=O(\rho^{-\beta+\beta\varepsilon}).
\end{equation}
Since \(\partial_{z^1}=(z^1)^{\beta-1}\partial_\zeta\), the transformation law for
connections gives
\begin{align}
  \Gamma^1_{11}&=\frac{\beta-1}{z^1}+(z^1)^{\beta-1}\widetilde\Gamma^1_{11},
  &\Gamma^a_{11}&=(z^1)^{2\beta-2}\widetilde\Gamma^a_{11},
  \label{eq:transform-1}\\
  \Gamma^1_{a1}&=\widetilde\Gamma^1_{a1},
  &\Gamma^b_{a1}&=(z^1)^{\beta-1}\widetilde\Gamma^b_{a1},
  \label{eq:transform-2}\\
  \Gamma^1_{ab}&=(z^1)^{1-\beta}\widetilde\Gamma^1_{ab},
  &\Gamma^c_{ab}&=\widetilde\Gamma^c_{ab}.
  \label{eq:transform-3}
\end{align}
The powers of \(z^1\) are written after choosing a branch, but the left-hand sides are
the Christoffel symbols of \(g\) in the coordinates \(z\), and the estimates below are
independent of the branch.  Combining
\eqref{eq:transform-1}--\eqref{eq:transform-3} with \eqref{eq:cone-christoffel}:
\begin{align}
  \Gamma^1_{11}&=\frac{\beta-1}{z^1}+O(\rho^{-1+\beta\varepsilon}),
  &\Gamma^a_{11}&=O(\rho^{\beta-2+\beta\varepsilon}),
  \label{eq:gamma-est-1}\\
  \Gamma^1_{a1}&=O(\rho^{-\beta+\beta\varepsilon}),
  &\Gamma^b_{a1}&=O(\rho^{-1+\beta\varepsilon}),
  \label{eq:gamma-est-2}\\
  \Gamma^1_{ab}&=O(\rho^{1-2\beta+\beta\varepsilon}),
  &\Gamma^c_{ab}&=O(\rho^{-\beta+\beta\varepsilon}).
  \label{eq:gamma-est-3}
\end{align}

\begin{corollary}[Subcritical pole estimates]\label{cor:subcritical}
There are \(\ell_0<1\) and \(\ell_1<2\) such that, locally uniformly in \(z'\),
\begin{equation}\label{eq:subcritical}
  \Gamma^k_{aj}=O(\rho^{-\ell_0}),
  \qquad
  \Gamma^k_{1j}=O(\rho^{-\ell_1}) .
\end{equation}
\end{corollary}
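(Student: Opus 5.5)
The corollary follows by reading off exponents from \eqref{eq:gamma-est-1}--\eqref{eq:gamma-est-3}. For each family, take the largest pole order among its members and check that it is strictly below the stated threshold. Because \(\rho\) is bounded on the shrunken ball, \(\rho^{-m}\le C\rho^{-m'}\) whenever \(m\le m'\). So it suffices to exhibit one \(\ell_0<1\) and one \(\ell_1<2\) that dominate all the relevant exponents. Uniformity in \(z'\) is inherited from Lemma~\ref{lem:scale-estimates}, whose constant \(\Lambda\) is uniform after shrinking \(B\).

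\textbf{Step 1: the symbols \(\Gamma^k_{aj}\).} These are \(\Gamma^1_{a1}\), \(\Gamma^b_{a1}\), \(\Gamma^1_{ab}\) and \(\Gamma^c_{ab}\); by symmetry of the Levi--Civita connection, \(\Gamma^k_{a1}=\Gamma^k_{1a}\). Their pole orders are \(\beta-\beta\varepsilon\), \(1-\beta\varepsilon\), \(2\beta-1-\beta\varepsilon\) and \(\beta-\beta\varepsilon\), respectively. Each is at most
\[
\ell_0:=\max\{1-\beta\varepsilon,\ \beta-\beta\varepsilon,\ 2\beta-1-\beta\varepsilon\}=1-\beta\varepsilon<1,
\]
where the last equality uses \(\beta<1\). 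For \(\Gamma^1_{ab}\) the exponent \(1-2\beta+\beta\varepsilon\) may be positive, in which case the symbol is bounded. The bound \(O(\rho^{-\ell_0})\) still holds because \(\rho\) is bounded.

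\textbf{Step 2: the symbols \(\Gamma^k_{1j}\).} The new cases are \(\Gamma^1_{11}\) and \(\Gamma^a_{11}\); the case \(\Gamma^k_{1a}\) is already covered by Step 1 with order below \(1<2\). Here the explicit term \((\beta-1)/z^1\) in \eqref{eq:gamma-est-1} has modulus \((1-\beta)\rho^{-1}\), and the remainder is \(O(\rho^{-1+\beta\varepsilon})\), so \(\Gamma^1_{11}=O(\rho^{-1})\). For the other symbol, \(\Gamma^a_{11}=O(\rho^{-(2-\beta-\beta\varepsilon)})\). We can therefore take
\[
\ell_1:=\max\{1,\ 2-\beta-\beta\varepsilon,\ \ell_0\}=2-\beta-\beta\varepsilon<2,
\]
where the last equality uses \(\beta<1\), and \(\ell_1<2\) holds since \(\beta>0\).

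The only point that needs care is Step 2. It is the one place where the explicit, non-subcritical-looking term \((\beta-1)/z^1\) appears, and one must note that it only contributes order \(1\). The real danger is \(\Gamma^a_{11}\): the factor \((z^1)^{2\beta-2}\) nearly produces a double pole, and it is the gain \(\rho^{\beta+\beta\varepsilon}\) from \eqref{eq:cone-christoffel} that keeps the order strictly below \(2\). Beyond this the proof is bookkeeping.
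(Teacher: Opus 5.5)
Your argument is correct and follows the paper's proof: you read off the pole orders from \eqref{eq:gamma-est-1}--\eqref{eq:gamma-est-3}, use \(\Gamma^k_{1a}=\Gamma^k_{a1}\) to reduce the mixed symbols to the tangential case, and take maxima. One harmless slip is that \(\max\{1,\,2-\beta-\beta\varepsilon,\,\ell_0\}=2-\beta-\beta\varepsilon\) holds only when \(\beta(1+\varepsilon)\le1\), which \(\beta<1\) alone does not give; in general \(\ell_1=\max\{1,\,2-\beta-\beta\varepsilon\}\), and this is still \(<2\), so the conclusion stands.
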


\begin{proof}
For a tangential first lower index the pole exponents occurring in
\eqref{eq:gamma-est-2}--\eqref{eq:gamma-est-3} are at most
\(\beta-\beta\varepsilon\), \(1-\beta\varepsilon\) and
\(2\beta-1-\beta\varepsilon\), each strictly less than \(1\) because \(0<\beta<1\)
and \(\varepsilon>0\).  For first lower index \(1\), the K\"ahler symmetry
\(\Gamma^k_{1a}=\Gamma^k_{a1}\) reduces to the previous case except for
\(\Gamma^1_{11}=O(\rho^{-1})\) and
\(\Gamma^a_{11}=O(\rho^{-(2-\beta-\beta\varepsilon)})\), with
\(2-\beta-\beta\varepsilon<2\).  Take \(\ell_0,\ell_1\) to be the corresponding
maxima.
\end{proof}

\begin{remark}\label{rem:gain}
The positive exponent in Lemma~\ref{lem:scale-estimates}\ref{it:reg-metric} is
essential, and it is exactly what the \(\theta\)-independence of the limit
\eqref{eq:G-limit} buys.
The bound \(G=O(1)\) alone gives only the borderline estimate
\(|\partial G|=O(r^{-1})\); the gain \(r^\varepsilon\) is what moves the transformed
growth orders in Corollary~\ref{cor:subcritical} strictly below the integers \(1\)
and \(2\).
\end{remark}

\begin{remark}[Related Christoffel-symbol estimates]\label{rem:related-christoffel}
Estimates for the orders of growth of Christoffel symbols near a cone divisor
appear in Keller--Zheng
\cite[Proposition~2.3, eq.~(2.9)]{KellerZheng2018}.  Under their
\(C^{3,\alpha,\beta}\) assumptions, they describe the behaviour of the individual
Christoffel symbols after multiplication by the natural powers of \(|z^1|\); in
particular, they separate the model singular term
\[
  \Gamma^1_{11}=\frac{\beta-1}{z^1}+\text{remainder}.
\]
Related estimates are used by Song--Wang
\cite[\S4.1]{SongWang2016}.  Starting from the polyhomogeneous expansion of
the metric, they estimate its coefficients, its inverse, and the associated Chern
connection forms for a Chern--Weil calculation.

The purpose of \eqref{eq:gamma-est-1}--\eqref{eq:gamma-est-3} is different.  We
use them only to obtain the strict inequalities in
Corollary~\ref{cor:subcritical}; in the flat case, holomorphicity then turns these
growth estimates into the logarithmic extension of
Section~\ref{sec:log-extension}.
\end{remark}

\section{The Levi--Civita connection is logarithmic}\label{sec:log-extension}

Let \(E\) be a holomorphic vector bundle on a complex manifold and \(D=\{z^1=0\}\) a
smooth divisor.  A connection \(\nabla\) on \(E\) is \emph{logarithmic along \(D\)}
if, in a holomorphic frame,
\begin{equation}\label{eq:log-connection}
  \nabla=d+A,
  \qquad
  A=A_1\frac{dz^1}{z^1}+\sum_{a=2}^nA_a\,dz^a
\end{equation}
with every \(A_i\) holomorphic across \(D\); its \emph{residue} is
\(\Res_D(\nabla)=A_1|_D\in H^0(D,\End(E|_D))\).  Neither notion depends on the local
equation for \(D\) or on the holomorphic frame, the residue being conjugated under a
change of frame.  See \cite[Chapter~II]{Deligne1970} or
\cite[Sections~3--4]{NovikovYakovenko2004}.

For a K\"ahler metric the Levi--Civita connection preserves \(T^{1,0}\) and agrees
there with the Chern connection of \((T^{1,0},g)\); in the coordinate frame
\begin{equation}\label{eq:christoffel-formula}
  \Gamma^k_{ij}=g^{k\bar\ell}\partial_ig_{j\bar\ell},
  \qquad
  \nabla_{\partial_i}\partial_j=\Gamma^k_{ij}\partial_k,
\end{equation}
the K\"ahler condition being the torsion-freeness \(\Gamma^k_{ij}=\Gamma^k_{ji}\).
The curvature of the Chern connection is \(-\partial_{\bar\ell}\Gamma^k_{ij}\), so
\(g\) is flat if and only if every \(\Gamma^k_{ij}\) is holomorphic.

The following result is elementary and left to the reader.

\begin{lemma}\label{lem:extension}
Let \(U\subset\C^n\) be a polydisc, \(D=\{z^1=0\}\), and \(f\in\cO(U\setminus D)\) with
\(|f(z)|\le C|z^1|^{-\ell}\) for some \(C>0\) and \(\ell>0\).  If
\(m\in\N\cup\{0\}\) and \(\ell<m+1\), then \((z^1)^mf\) extends holomorphically
across \(D\).
\end{lemma}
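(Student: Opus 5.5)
The plan is to reduce to a removable singularity statement for the holomorphic function \(h=(z^1)^mf\) on \(U\setminus D\), and then to use Riemann's extension theorem one variable at a time. Set \(\ell'=\ell-m<1\). The hypothesis gives
\[
  |h(z)|\le C|z^1|^{-\ell'} .
\]
If \(\ell'\le0\), then \(h\) is bounded near \(D\), and the classical Riemann extension theorem in several variables gives the extension at once.

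In general I would argue slicewise, followed by an integral representation.

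\begin{itemize}
\item For fixed \(z'\), the function \(z^1\mapsto h(z^1,z')\) is holomorphic on a punctured disc. It satisfies \(|z^1h|\le C|z^1|^{1-\ell'}\to0\), so \(z^1h\) has a removable singularity at \(0\) with value \(0\).
\item Consequently \(h\) itself has at worst a removable singularity there: its Laurent expansion \(\sum_k c_k(z')(z^1)^k\) has \(c_k=0\) for \(k<0\).
\item To see this directly, estimate on circles \(|z^1|=s\):
\[
  |c_k(z')|\le Cs^{-\ell'-k},
\]
and for \(k\le-1\) the exponent \(-\ell'-k\ge 1-\ell'>0\), so letting \(s\to0\) forces \(c_k=0\).
\end{itemize}

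Joint holomorphy of the extension is the only point needing care. I would define the extension \(H\) by the Cauchy integral
\[
  H(z^1,z')=\frac{1}{2\pi\ii}\oint_{|\xi|=s_0}\frac{h(\xi,z')}{\xi-z^1}\,d\xi
\]
for \(|z^1|<s_0\), with \(s_0\) smaller than the polydisc radius in the first variable. This integral is manifestly holomorphic in \((z^1,z')\), since the integrand is holomorphic in \(z'\) and continuous on the circle, locally uniformly. By the slicewise removability just established, \(H\) agrees with \(h\) for \(z^1\neq0\). Gluing \(H\) with \(h\) away from \(D\) yields the holomorphic extension across \(D\).

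The only mildly delicate step is ensuring uniformity in \(z'\) of the bound, which the hypothesis supplies globally on \(U\setminus D\). No real obstacle is expected.
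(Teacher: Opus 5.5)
Your proof is correct. The paper gives no argument of its own (it calls the lemma elementary and leaves it to the reader), and your argument is a complete verification: the Laurent estimate on each slice, followed by the Cauchy integral in \(z^1\) to get joint holomorphy.

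A slightly shorter route is to observe that \((z^1)^{m+1}f\) is bounded by \(C|z^1|^{m+1-\ell}\), which tends to \(0\) along \(D\). Riemann's extension theorem then extends it holomorphically, the extension vanishes on \(D\), and so it is divisible by \(z^1\).

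You single out uniformity in \(z'\) as the delicate point, but your argument never uses it. The Cauchy integral over \(|\xi|=s_0\) is holomorphic in \((z^1,z')\) regardless, so slicewise removability alone gives the joint extension.
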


The main result of this section is the following.

\begin{theorem}[Logarithmic extension]\label{thm:log-extension}
Let \(g\) be a flat K\"ahler metric on \(B\setminus D\), satisfying
\eqref{eq:edge-comparison} with \(0<\beta<1\), and polyhomogeneous along \(D\) in the
sense of Definition~\ref{def:polyhomogeneous}.  Let \(\nabla\) be its Levi--Civita
connection on \(T^{1,0}(B\setminus D)\).  After shrinking \(B\), \(\nabla\) extends
as a logarithmic connection on \(T^{1,0}B\) whose residue, in the
coordinate frame \((\partial_1,\dots,\partial_n)\), has the form
\begin{equation}\label{eq:residue-block}
  \Res_D(\nabla)=\begin{pmatrix}\beta-1&0\\ v&0\end{pmatrix}
\end{equation}
for a holomorphic column \(v\) along \(D\).
\end{theorem}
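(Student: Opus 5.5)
Since \(g\) is flat, every Christoffel symbol \(\Gamma^k_{ij}\) in \eqref{eq:christoffel-formula} is holomorphic on \(B\setminus D\). The plan is to write the connection matrix in the coordinate frame in the form \eqref{eq:log-connection}, namely
\[
  A_1=z^1\Gamma_1,\qquad A_a=\Gamma_a,
  \qquad (\Gamma_i)^k{}_j=\Gamma^k_{ij},
\]
and to show that these matrices extend holomorphically across \(D\). Lemma~\ref{lem:extension} does this once we feed it the subcritical bounds of Corollary~\ref{cor:subcritical}. Those bounds hold locally uniformly in \(z'\), so after shrinking \(B\) to a polydisc they hold uniformly.

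For the tangential directions, Corollary~\ref{cor:subcritical} gives \(\Gamma^k_{aj}=O(\rho^{-\ell_0})\) with \(\ell_0<1\). Lemma~\ref{lem:extension} with \(m=0\) then shows that \(A_a=\Gamma_a\) is holomorphic across \(D\). If \(\ell_0\le0\) the function is bounded and Riemann extension applies directly. In the normal direction we have \(\Gamma^k_{1j}=O(\rho^{-\ell_1})\) with \(\ell_1<2\), so Lemma~\ref{lem:extension} with \(m=1\) shows that \(z^1\Gamma_1\) extends. Hence \(\nabla\) extends as a logarithmic connection on \(T^{1,0}B\).

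It remains to compute the residue \(A_1|_D\), entry by entry, using the sharper estimates \eqref{eq:gamma-est-1}--\eqref{eq:gamma-est-3}.
\begin{itemize}
\item For \((1,1)\): \(z^1\Gamma^1_{11}=(\beta-1)+O(\rho^{\beta\varepsilon})\), which tends to \(\beta-1\).
\item For \((1,a)\), i.e.\ \(k=1\), \(j=a\): by symmetry \(z^1\Gamma^1_{1a}=z^1\Gamma^1_{a1}=O(\rho^{1-\beta+\beta\varepsilon})\to0\).
\item For \((b,a)\): \(z^1\Gamma^b_{1a}=O(\rho^{\beta\varepsilon})\to0\).
\item For \((a,1)\): \(z^1\Gamma^a_{11}=O(\rho^{\beta-1+\beta\varepsilon})\) need not vanish in the limit. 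Its restriction to \(D\) is the holomorphic column \(v\), which is left unconstrained.
\end{itemize}
A holomorphic function that tends to \(0\) as \(\rho\to0\) vanishes on \(D\), so this gives exactly \eqref{eq:residue-block}.

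The only delicate point is the residue entries. There one needs the \(r^{\varepsilon}\) gain of Lemma~\ref{lem:scale-estimates} (Remark~\ref{rem:gain}), so that the limits are genuine and not merely bounded. Everything else follows directly from the corollary and Lemma~\ref{lem:extension}.
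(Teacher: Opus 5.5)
Your proposal is correct and follows the paper's proof: flatness makes the \(\Gamma^k_{ij}\) holomorphic, and Lemma~\ref{lem:extension} with \(m=0\) and \(m=1\), fed by Corollary~\ref{cor:subcritical}, gives the logarithmic extension. The residue is then read off from \eqref{eq:gamma-est-1}. The one small difference is in the columns \(j=a\ge2\): you use the explicit decay of \(z^1\Gamma^k_{1a}\), whereas the paper observes that \(\Gamma^k_{1a}=\Gamma^k_{a1}\) already extends holomorphically by the \(m=0\) step, so \(z^1\Gamma^k_{1a}\) vanishes on \(D\) without any further estimate.
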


\begin{proof}
Every \(\Gamma^k_{ij}\) is holomorphic on \(B\setminus D\).  Applying
Lemma~\ref{lem:extension} with \(m=0\) to the first family in
\eqref{eq:subcritical} shows that each \(\Gamma^k_{aj}\) extends holomorphically
across \(D\), and with \(m=1\) to the second family that each \(z^1\Gamma^k_{1j}\)
does.  Hence \(\nabla\) has the shape \eqref{eq:log-connection}.

By definition \(\bigl(\Res_D(\nabla)\bigr)^k{}_j=z^1\Gamma^k_{1j}|_D\).  For
\(j=a\ge2\), torsion-freeness gives \(\Gamma^k_{1a}=\Gamma^k_{a1}\), which is
holomorphic across \(D\), so \(z^1\Gamma^k_{1a}|_D=0\): all columns of the residue
except the first vanish.  By \eqref{eq:gamma-est-1},
\(z^1\Gamma^1_{11}=\beta-1+O(\rho^{\beta\varepsilon})\), so the extension of
\(z^1\Gamma^1_{11}\) equals \(\beta-1\) on \(D\).
For \(a\ge2\), the holomorphic extension of \(z^1\Gamma^a_{11}\) may
have a nonzero restriction to \(D\); setting
\[
v^a=\left.z^1\Gamma^a_{11}\right|_D
\]
defines the holomorphic column \(v=(v^2,\dots,v^n)^{\mathsf T}\).
This is
\eqref{eq:residue-block}.
\end{proof}

\begin{remark}\label{rem:beta-two}
For \(\beta>1\) the Levi--Civita connection need not extend logarithmically.  Let
\[
  \pi(u,v)=\bigl(u^2+v^2,\,v\bigr)
\]
be the degree two cover of \(\C^2\) branched along the parabola
\(\{z^1=(z^2)^2\}\) and ramified along \(D=\{u=0\}\).  By construction the pullback
\(g=\pi^*g_{\mathrm{Euc}}\) is flat on the complement of \(D\) and has cone angle
\(4\pi\) along \(D\); in particular it satisfies \eqref{eq:edge-comparison} with
\(\beta=2\).  Its Levi--Civita connection is the pullback of the trivial one, so
differentiating the coefficients of
\[
  \pi_*\partial_u=2u\,\partial_{z^1},
  \qquad
  \pi_*\partial_v=2v\,\partial_{z^1}+\partial_{z^2}
\]
and inverting \(\pi_*\) gives
\[
  \nabla_{\partial_u}\partial_u=\nabla_{\partial_v}\partial_v=\frac1u\,\partial_u,
  \qquad
  \nabla_{\partial_u}\partial_v=0 .
\]
In the frame \((\partial_u,\partial_v)\) the connection form is therefore
\[
  A=\frac1u\begin{pmatrix}du&dv\\[2pt]0&0\end{pmatrix} .
\]
The coefficient of \(dv\) has a pole along \(D\), so \(A\) is not of the form
\eqref{eq:log-connection}.
\end{remark}

\section{Normal form for torsion-free flat logarithmic connections}
\label{sec:normal-form}

We use throughout the convention \(\nabla=d+A\), so that the residue is the residue
of \(A\).  In this section the coordinate frame is indexed by \(1,\dots,n\), and
\(E_{11}\) denotes the \(n\times n\) matrix with a single \(1\) in the upper-left
entry.

\begin{theorem}[Coordinate normal form]\label{thm:connection-normal-form}
Let \(U\subset\C^n\) be a ball centred at the origin, \(D=\{z^1=0\}\), and let
\(\nabla\) be a flat torsion-free logarithmic connection on \(TU\).  Put
\(R=\Res_D(\nabla)\).  Put \(\lambda=\tr R\) and suppose that
\(\lambda\in\C\setminus\Z\).  Then, after shrinking \(U\), there are holomorphic
coordinates \(w=(w^1,\dots,w^n)\) centred at the origin with \(D=\{w^1=0\}\) and
\begin{equation}\label{eq:connection-normal-form}
  \nabla=d+\lambda E_{11}\frac{dw^1}{w^1}
\end{equation}
in the coordinate frame; equivalently
\(\nabla_{\partial_{w^1}}\partial_{w^1}=\lambda(w^1)^{-1}\partial_{w^1}\) and all other
covariant derivatives of coordinate fields vanish.
\end{theorem}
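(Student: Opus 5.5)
The plan is to build affine (flat) coordinates for \(\nabla\) on the universal cover of \(U\setminus D\). Then I will show that all but one of them are single-valued and holomorphic across \(D\), and that the remaining one has the form \((z^1)^{\lambda+1}\) times a unit.

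\emph{Step 1: shape of the residue.} Write \(\nabla=d+A\) as in \eqref{eq:log-connection}. Torsion-freeness gives \(\Gamma^k_{1a}=\Gamma^k_{a1}\). The right-hand side is a coefficient of \(A_a\,dz^a\), so it is holomorphic across \(D\). Hence, exactly as in the proof of Theorem~\ref{thm:log-extension}, \(R\partial_a=0\) for \(a\ge2\). So \(R\) has the block form \eqref{eq:residue-block} with \(R^1{}_1=\lambda\), and its eigenvalues are \(\lambda\) (simple) and \(0\) (multiplicity \(n-1\)). Because \(\lambda\notin\Z\), the residue is non-resonant and diagonalizable. Flatness also implies that \(R\) is parallel for the induced connection on \(\End(TU|_D)\), so its conjugacy class is constant along \(D\).

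\emph{Step 2: holomorphic gauge to the residue.} This is the step I expect to be the main obstacle. I will pass to the dual connection on \(T^*U\), which is again flat and logarithmic, with residue \(-R^{\mathsf T}\). Its parallel sections are exactly the closed \(1\)-forms \(\eta\) with \(\nabla\eta=0\); closedness is where torsion-freeness is used. I will invoke the non-resonant case of Deligne's theory \cite[Chapter~II]{Deligne1970}, or prove it directly by a formal power series in \(z^1\) with coefficients holomorphic in \(z'\), using flatness to control the \(z'\)-dependence. The conclusion should be that there is a holomorphic invertible gauge \(H\) with
\[
\nabla^*=H\Bigl(d-R_0\tfrac{dz^1}{z^1}\Bigr)H^{-1},\qquad R_0=\mathrm{diag}(\lambda,0,\dots,0).
\]
The recursion for the coefficients of \((z^1)^m\) requires inverting \(m-\operatorname{ad}R_0\) for \(m\ge1\), which is possible because \(\pm\lambda\notin\Z\). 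Convergence is the standard majorant argument, made uniform in \(z'\).

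\emph{Step 3: integrating the parallel forms.} Step 2 yields single-valued holomorphic parallel closed forms \(\eta^2,\dots,\eta^n\). It also yields one multivalued parallel form \(\eta^1=(z^1)^{\lambda}\alpha\) with \(\alpha\) holomorphic, and \(\eta^1,\dots,\eta^n\) are pointwise independent. On the ball the closed forms \(\eta^a\) are exact, \(\eta^a=dw^a\), with \(w^a(0)=0\). For \(\eta^1\), I will solve \(du=\eta^1\) by the ansatz \(u=(z^1)^{\lambda+1}f\), with \(f\) obtained from a series expansion in \(z^1\). This works because \(\lambda+1+m\neq0\) for every integer \(m\). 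Any additive constant, which is the translation part of the affine monodromy, is removed: the monodromy multiplies \(u\) by \(e^{2\pi\ii\lambda}\neq1\), so there is a unique fixed point to normalize to \(0\). Independence forces \(f(0)\neq0\) and \(dw^2,\dots,dw^n\) to be independent along \(D\).

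\emph{Step 4: the normal form.} I will set \(w^1=z^1\bigl((\lambda+1)f\bigr)^{1/(\lambda+1)}\), using a branch of the root, so that \(u=(w^1)^{\lambda+1}/(\lambda+1)\). Then \((w^1,w^2,\dots,w^n)\) are holomorphic coordinates centred at \(0\) with \(D=\{w^1=0\}\). In the flat coordinates \((u,w^2,\dots,w^n)\) the connection is trivial. Since \(\partial_{w^1}=(w^1)^{\lambda}\partial_u\), a one-line computation gives \(\nabla_{\partial_{w^1}}\partial_{w^1}=\lambda(w^1)^{-1}\partial_{w^1}\), and all other covariant derivatives of coordinate fields vanish. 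This is \eqref{eq:connection-normal-form}.
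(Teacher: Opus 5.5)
Your proof is correct. After the common first step it takes a different route from the paper.

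Both arguments rest on the same two inputs:
\begin{enumerate}
\item torsion-freeness gives $TD\subseteq\ker R$, so $R$ has rank one and is diagonalizable with eigenvalues $\lambda,0,\dots,0$;
\item the nonresonant gauge normal form, Theorem~\ref{thm:gauge}.
\end{enumerate}
Your Step~2 amounts to taking the dual coframe of the paper's frame $(s_1,\dots,s_n)$ in \eqref{eq:gauge-frame}.

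From there the paper stays on $TU$. It shows that $\mathcal N=\cO s_1$ and $\mathcal T=\Span_{\cO}\{s_2,\dots,s_n\}$ are integrable and straightens them by Frobenius. It then proves that the mixed covariant derivatives vanish, rectifies the commuting parallel fields $s_a$, and reduces to a one-variable connection $(\lambda/u+q(u))\partial_u$. Finally it solves the ODE \eqref{eq:f-ode}.

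You work on $T^*U$ instead, where parallel forms are closed by torsion-freeness, and integrate the parallel coframe directly. The Poincar\'e lemma gives $w^2,\dots,w^n$, and the twisted primitive $u=(z^1)^{\lambda+1}f$ of $\eta^1$ gives the normal coordinate. This is the developing map of the flat structure, in the spirit of the paper's own Section~\ref{sec:jet}.

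What each route buys:
\begin{itemize}
\item Your route bypasses Frobenius, Lemma~\ref{lem:mixed-vanishing} and the rectification step, and closedness handles the $z'$-dependence automatically. Proposition~\ref{prop:one-dimensional-normal-form} is in fact your Step~3 in one variable: \eqref{eq:differential-identity} says exactly that $w^\lambda\,dw$ equals the parallel form $u^\lambda e^{Q}\,du$.
\item The paper's route makes the pair of parallel foliations and the product decomposition explicit. That is the structure exploited in \cite{deBorbonPanov2021}.
\end{itemize}

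One point in Step~3 should be written out. Write $\alpha=\alpha_1\,dz^1+\sum_a\alpha_a\,dz^a$ and expand in powers of $z^1$. The recursion $(\lambda+1+m)f_m=\alpha_{1,m}$ only solves the $dz^1$-component of $du=\eta^1$. The $dz^a$-components, $z^1\partial_af=\alpha_a$, follow from $d\eta^1=0$. Its $dz^1\wedge dz^a$ part gives $\alpha_{a,0}=0$ and $(\lambda+m)\alpha_{a,m}=\partial_a\alpha_{1,m-1}$ for $m\ge1$, again using $\lambda\notin\Z$. With your ansatz no additive constant arises, so the remark about the translation part of the monodromy is unnecessary.
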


\begin{remark}
For a flat logarithmic connection, the conjugacy class of the residue is locally
constant along the smooth part of the polar divisor; see
\cite[Theorem~4.2]{NovikovYakovenko2004}.  Since \(D\) is connected, its
eigenvalues and trace are constant on \(D\).  Thus the constancy of
\(\lambda=\tr R\) in Theorem~\ref{thm:connection-normal-form} is automatic and is
not an additional hypothesis.
\end{remark}

\begin{remark}[Torsion and the residue]\label{rem:torsion-residue}
Torsion-freeness implies
\[
  TD\subseteq\ker R;
\]
see \cite[Lemma~A.10]{deBorbonPanov2021}.  If \(\lambda=\tr R\neq0\), it follows
that
\[
  \ker R=TD,\qquad \operatorname{rank}R=1,\qquad R^2=\lambda R.
\]
In particular, \(R\) is diagonalizable with eigenvalues \(\lambda,0,\dots,0\), and
its \(\lambda\)-eigenline is transverse to \(TD\).
\end{remark}

\subsection{Gauge normalisation and parallel foliations}

We use the following normal form theorem for flat logarithmic connections, for a proof see  \cite[Theorem~A.11]{deBorbonPanov2021}.

\begin{theorem}[Nonresonant gauge normal form]\label{thm:gauge}
Let \(\nabla=d+A\) be a flat logarithmic connection on a holomorphic vector bundle
over a polydisc \(U\), with polar divisor \(D=\{z^1=0\}\).  Assume that
\(R=\Res_D(\nabla)\) is diagonalizable and that no two of its eigenvalues differ by
a nonzero integer.  Then, after shrinking \(U\), there is a holomorphic frame in
which \(\nabla=d+R_0\,dz^1/z^1\) with \(R_0\) constant and diagonal.
\end{theorem}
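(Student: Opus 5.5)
The plan is to normalise the connection in three stages: first make the residue constant and diagonal, then remove the tangential connection components along \(D\), and finally kill the holomorphic part of the normal component by a gauge transformation, with nonresonance controlling the last stage. Throughout, flatness of \(\nabla=d+A\) will be used through the identity \(dA+A\wedge A=0\). In the frame of \eqref{eq:log-connection} it reads
\[
  \partial_aA_1-z^1\partial_1A_a+[A_a,A_1]=0,
  \qquad
  \partial_aA_b-\partial_bA_a+[A_a,A_b]=0 .
\]

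\textbf{Step 1: constant diagonal residue.} Restricting the first identity to \(D\) gives
\[
  \partial_aR+[A_a|_D,R]=0 .
\]
So \(R\) is parallel for the connection \(d+\sum_a A_a|_D\,dz^a\) on \(E|_D\). The second identity shows this connection is flat. Take a flat frame of \(E|_D\) over the polydisc \(D\cap U\). In that frame \(\partial_aR=0\), so \(R\) is constant, and a constant change of frame makes it diagonal, \(R=R_0\). Extend this frame of \(E|_D\) to a holomorphic frame of \(E\) near \(D\), independently of \(z^1\). In the new frame
\[
  A_1=R_0+z^1B(z),
  \qquad
  A_a=z^1C_a(z),
\]
with \(B\) and \(C_a\) holomorphic.

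\textbf{Step 2: kill the \(dz^1\) part.} Look for a gauge \(P=\Id+z^1P_1(z)\) such that \(P^{-1}AP+P^{-1}dP\) has \(dz^1\)-component exactly \(R_0\,dz^1/z^1\). This is the Fuchsian equation
\[
  z^1\partial_1P=PR_0-A_1P
\]
in the variable \(z^1\), with \(z'\) as holomorphic parameters. Expand \(P=\sum_k(z^1)^kP_k(z')\) with \(P_0=\Id\). The recursion is
\[
  kP_k-[R_0,P_k]=\text{(terms in }P_0,\dots,P_{k-1}\text{)} .
\]
The operator \(X\mapsto kX-[R_0,X]\) has eigenvalues \(k-(\mu_i-\mu_j)\) on matrix units, where \(\mu_i\) are the eigenvalues of \(R_0\). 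By the nonresonance hypothesis these are nonzero for every \(k\ge1\), so the recursion is uniquely solvable. The inverse of this operator is \(O(1/k)\) uniformly in \(k\), so the classical majorant argument for Frobenius series gives convergence on a uniform polydisc, holomorphic in \(z'\). After the gauge, \(A_1=R_0\).

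\textbf{Step 3: the tangential components vanish.} With \(A_1=R_0\) constant, the first flatness identity becomes
\[
  z^1\partial_1A_a=[A_a,R_0] .
\]
Expand \(A_a=\sum_{k\ge1}(z^1)^kA_{a,k}(z')\); the vanishing of the \(k=0\) term is preserved because \(P\equiv\Id\) on \(D\). Comparing coefficients gives \(kA_{a,k}-[A_{a,k},R_0]=0\), and nonresonance again forces \(A_{a,k}=0\). Hence \(A=R_0\,dz^1/z^1\), which is the claim.

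I expect the main obstacle to be the convergence in Step 2, namely uniformity of the majorant estimate in the parameter \(z'\). The first attempt will be the standard Cauchy majorant argument on a slightly smaller polydisc, using the bound \(\|(k-\operatorname{ad}_{R_0})^{-1}\|\le C/k\). As a fallback, I would construct \(P\) as \(\Phi\,(z^1)^{-R_0}\), where \(\Phi\) is a flat fundamental matrix. One then checks single-valuedness using the monodromy \(\exp(2\pi\ii R_0)\), and holomorphic extension using moderate growth together with nonresonance.
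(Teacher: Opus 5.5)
The paper contains no proof of this statement: it is quoted from \cite[Theorem~A.11]{deBorbonPanov2021} and used as a black box. So there is nothing in the paper to match your route against.

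Your argument is a correct, self-contained proof. It is the classical Frobenius (Poincar\'e--Dulac) normalisation of a Fuchsian system in the normal variable \(z^1\), with \(z'\) as holomorphic parameters. Flatness enters exactly twice. Restricted to \(D\), it makes the residue parallel, hence constant in a flat frame of \(E|_D\). At the end, once \(A_1=R_0\), it forces the tangential components to vanish, using that they already vanish on \(D\).

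Two small remarks.
\begin{enumerate}
\item In Step~2 the recursion should read \(kP_k+[R_0,P_k]=-\sum_{j\ge1}A_{1,j}P_{k-j}\), where \(A_1=\sum_j(z^1)^jA_{1,j}(z')\). The sign is immaterial, since the relevant eigenvalues \(k\pm(\mu_i-\mu_j)\) are nonzero for \(k\ge1\) either way.
\item The convergence you flag as the main obstacle is routine. Because \(R_0\) is constant after Step~1, the inverse of \(k+\operatorname{ad}_{R_0}\) is a fixed linear map of norm \(\le C/k\). Each \(P_k\) is therefore holomorphic in \(z'\). Cauchy bounds \(\|A_{1,j}\|\le Mr^{-j}\) on a smaller closed polydisc then give \(\|P_k\|\le K^k\) by induction, uniformly in \(z'\), as soon as \(rK\ge1+CM\). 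So the fallback through a fundamental matrix \(\Phi\,(z^1)^{-R_0}\), which would need a moderate-growth input, is unnecessary.
\end{enumerate}

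Your method also uses diagonalizability only to make \(R_0\) diagonal. The same argument gives a frame with constant residue in Jordan form for any nonresonant residue.
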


The next result is a particular case of
\cite[Lemmas~4.35 and~4.47]{deBorbonPanov2021}.  We include the proof for
completeness.

\begin{lemma}[Parallel foliations and product coordinates]\label{lem:product-coordinates}
Assume the hypotheses of Theorem~\ref{thm:connection-normal-form}.  After shrinking
\(U\) there are a holomorphic frame
\((s_1,\dots,s_n)\) of \(TU\) with
\begin{equation}\label{eq:gauge-frame}
  \nabla s_1=\lambda\frac{dz^1}{z^1}\otimes s_1,
  \qquad
  \nabla s_a=0\quad(a\ge2),
\end{equation}
and holomorphic coordinates \((u,y^2,\dots,y^n)\) centred at the origin such that,
setting \(\mathcal N=\cO s_1\) and \(\mathcal T=\Span_{\cO}\{s_2,\dots,s_n\}\),
\[
  D=\{u=0\},
  \qquad
  \mathcal N=\Span\{\partial_u\},
  \qquad
  \mathcal T=\Span\{\partial_{y^2},\dots,\partial_{y^n}\}.
\]
\end{lemma}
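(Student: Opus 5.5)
First I apply Theorem~\ref{thm:gauge} to \(\nabla\) on \(TU\).  By Remark~\ref{rem:torsion-residue}, since \(\lambda\neq0\) (as \(\lambda\notin\Z\)), the residue \(R\) is diagonalizable with eigenvalues \(\lambda,0,\dots,0\).  The only nonzero differences of eigenvalues are \(\pm\lambda\), which are not integers, so the nonresonance hypothesis holds.  After shrinking \(U\) I therefore get a holomorphic frame in which \(\nabla=d+R_0\,dz^1/z^1\) with \(R_0=\operatorname{diag}(\lambda,0,\dots,0)\) after reordering.  Naming this frame \((s_1,\dots,s_n)\) gives \eqref{eq:gauge-frame}.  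I will note that \(s_1|_D\) spans the \(\lambda\)-eigenline of the residue, which by Remark~\ref{rem:torsion-residue} is transverse to \(TD\), while \(s_a|_D\) span \(\ker R=TD\).

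Next I show that \(\mathcal N\) and \(\mathcal T\) are involutive distributions, using torsion-freeness \([X,Y]=\nabla_XY-\nabla_YX\).  For \(\mathcal T\): \(\nabla s_a=0\) gives \([s_a,s_b]=0\), so \(\mathcal T\) is integrable.  \(\mathcal N\) has rank one, hence is trivially integrable.  Mixed brackets give additional structure: \([s_1,s_a]=\nabla_{s_1}s_a-\nabla_{s_a}s_1=-\lambda\,(dz^1(s_a)/z^1)\,s_1\).  Since \(s_a\) is tangent to \(D\) along \(D\), \(dz^1(s_a)\) is divisible by \(z^1\), so this coefficient is holomorphic.  Hence \([\mathcal N,\mathcal T]\subseteq\mathcal N\), and therefore \(\mathcal N\oplus\mathcal T\)-adapted coordinates should exist.

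The coordinates are constructed as follows.  The commuting fields \(s_2,\dots,s_n\) define a local \(\C^{n-1}\)-action, and \(\mathcal N\) is a line field.  Take \(y^a\) to be functions constant along the leaves of \(\mathcal N\) (first integrals of \(s_1\)) and \(u\) a function constant along the leaves of \(\mathcal T\) with \(du(s_1)\neq0\).  Transversality at the origin follows from \(s_1\notin TD\) and \(s_a|_D\) spanning \(TD\).  For the kernels to match I need \(du|_{\mathcal T}=0\) and \(dy^a|_{\mathcal N}=0\); then \(\partial_u\) spans \(\ker(dy)=\mathcal N\) and \(\partial_{y^a}\) span \(\ker du=\mathcal T\).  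To get \(D=\{u=0\}\), I observe that \(D\) is a leaf of \(\mathcal T\): along \(D\), \(\mathcal T=TD\), and \(D\) is a connected integral submanifold through the origin.  I then choose \(u\) as a first integral of \(\mathcal T\) vanishing on \(D\), defined for instance by \(u=z^1\) restricted to a transversal to \(\mathcal T\) and extended constantly along the leaves.  The \(y^a\) are obtained the same way: restrict the \(z^a\) to a transversal to \(\mathcal N\), for instance \(D\) itself, and extend them constantly along the \(\mathcal N\)-curves.

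The main obstacle is the construction of \(u\), because \(\mathcal T\) must be a genuine foliation with \(D\) as a leaf near the origin.  Integrability follows from the commuting \(s_a\), so Frobenius gives a local first integral, and the only real check is the tangency \(\mathcal T|_D=TD\).  That check comes from \(\ker R=TD\) together with the residue of \(\nabla\) in the frame \(s\) being \(R_0\).  Concretely, \(s_a|_D\) lies in the \(0\)-eigenspace of the residue, and this eigenspace is frame-independent since the residue transforms by conjugation.
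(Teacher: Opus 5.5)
Your proposal is correct and follows essentially the same route as the paper. Both arguments use the nonresonant gauge normal form, then $\mathcal T|_D=\ker R=TD$ from Remark~\ref{rem:torsion-residue}, then integrability of $\mathcal T$ via torsion-freeness and parallelism, and finally Frobenius first integrals $u$ (vanishing exactly on the leaf $D$) and $y^a$. Your mixed-bracket computation $[\mathcal N,\mathcal T]\subseteq\mathcal N$ is correct but not needed; adapted coordinates already follow from the two distributions being integrable and complementary, exactly as your explicit construction of $u$ and $y^a$ shows.
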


\begin{proof}
Since \(\lambda\notin\Z\) we have \(\lambda\neq0\), so by
Remark~\ref{rem:torsion-residue} the residue is diagonalizable with eigenvalues
\(\lambda,0,\dots,0\); the nonzero differences of eigenvalues are \(\pm\lambda\notin\Z\),
so Theorem~\ref{thm:gauge} applies and yields a holomorphic frame satisfying
\eqref{eq:gauge-frame}.  In this frame \(A=\lambda E_{11}\,dz^1/z^1\), so
\(R=\lambda E_{11}\); combining with Remark~\ref{rem:torsion-residue},
\begin{equation}\label{eq:distributions-on-D}
  \mathcal T|_D=\ker R=TD,
  \qquad
  \mathcal N|_D=\operatorname{im}R .
\end{equation}

The line distribution \(\mathcal N\) is integrable.  For holomorphic sections
\(X,Y\) of \(\mathcal T\), parallelism and torsion-freeness give
\([X,Y]=\nabla_XY-\nabla_YX\in\mathcal T\) on \(U\setminus D\); both sides extend
holomorphically across \(D\), so \(\mathcal T\) is integrable as well.  By
\eqref{eq:distributions-on-D} the leaf of \(\mathcal T\) through the origin is \(D\).
The holomorphic Frobenius theorem provides a first integral \(u\) of \(\mathcal T\)
with zero set \(D\), and \(n-1\) independent first integrals \(y^2,\dots,y^n\) of
\(\mathcal N\).  The distributions being complementary, \(du,dy^2,\dots,dy^n\) are
independent at the origin, and in the resulting coordinates \(\mathcal N=\Span\{\partial_u\}\)
and \(\mathcal T=\Span\{\partial_{y^a}\}\).
\end{proof}

The next result is a particular case of
\cite[Lemmas~4.55, 4.57 and~4.58]{deBorbonPanov2021}.  We include the proof for
completeness.

\begin{lemma}\label{lem:mixed-vanishing}
In the situation of Lemma~\ref{lem:product-coordinates},
\(\nabla_{\partial_u}\partial_{y^a}=\nabla_{\partial_{y^a}}\partial_u=0\) for all
\(a\ge2\).  Moreover, after a change of the tangential variables alone, one may
assume \(s_a=\partial_{y^a}\) and hence \(\nabla\partial_{y^a}=0\) for \(a\ge2\).
\end{lemma}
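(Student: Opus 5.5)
The plan is to compare the two sides of the torsion-free identity for the commuting coordinate fields \(\partial_u\) and \(\partial_{y^a}\), using that they lie in the two complementary distributions \(\mathcal N\) and \(\mathcal T\) of Lemma~\ref{lem:product-coordinates}. By that lemma, \(\mathcal N=\cO s_1=\Span\{\partial_u\}\), so we can write \(\partial_u=f\,s_1\) with \(f\) holomorphic and nowhere vanishing on \(U\). Likewise \(\mathcal T=\Span\{s_b\}=\Span\{\partial_{y^b}\}\), so \(\partial_{y^a}=h_a^b\,s_b\) with \((h_a^b)\) a holomorphic invertible matrix. We work on \(U\setminus D\) and extend by continuity at the end; everything involved is holomorphic across \(D\).

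\textbf{First claim.} Since \([\partial_u,\partial_{y^a}]=0\), torsion-freeness gives \(\nabla_{\partial_u}\partial_{y^a}=\nabla_{\partial_{y^a}}\partial_u\). We expand both sides using \eqref{eq:gauge-frame}. The left-hand side is
\[
  \nabla_{\partial_u}\partial_{y^a}=(\partial_u h_a^b)\,s_b\in\mathcal T,
\]
because the \(s_b\) are parallel. The right-hand side is
\[
  \nabla_{\partial_{y^a}}\partial_u=\Bigl(\partial_{y^a}f+\lambda f\,\frac{\partial_{y^a}z^1}{z^1}\Bigr)s_1\in\mathcal N .
\]
Since \(\mathcal N\cap\mathcal T=0\), both sides vanish. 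This proves \(\nabla_{\partial_u}\partial_{y^a}=\nabla_{\partial_{y^a}}\partial_u=0\), first on \(U\setminus D\) and then on \(U\) by continuity. As a by-product, \(\partial_u h_a^b=0\), so the matrix \(h\) depends only on \(y=(y^2,\dots,y^n)\).

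\textbf{Second claim.} Inverting, \(s_a=(h^{-1})_a^b(y)\,\partial_{y^b}\). Thus the \(s_a\) are vector fields in the \(y\)-variables alone, with coefficients independent of \(u\) and no \(\partial_u\) component. They are pointwise independent, and they commute, since \([s_a,s_b]=\nabla_{s_a}s_b-\nabla_{s_b}s_a=0\) by torsion-freeness and parallelism. By the standard theorem on commuting independent holomorphic vector fields, after shrinking there are holomorphic functions \(\tilde y^2(y),\dots,\tilde y^n(y)\), centred at \(0\), such that \(s_a=\partial/\partial\tilde y^a\) as fields on \(y\)-space.

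We then pass to the coordinates \((u,\tilde y)\). Because \(\tilde y\) depends on \(y\) only, the field \(\partial_u\) is unchanged, \(D=\{u=0\}\) still holds, and \(\partial_{\tilde y^a}=s_a\). Hence \(\nabla\partial_{\tilde y^a}=0\), and the first claim persists in the new coordinates.

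The only delicate point is justifying that \(h\) is independent of \(u\), so that the \(s_a\) genuinely descend to fields on \(y\)-space. This is exactly what the vanishing of the \(\mathcal T\)-component in the first step provides, so I do not expect a real obstacle beyond bookkeeping.
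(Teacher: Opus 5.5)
Your proposal is correct and follows the paper's proof. Torsion-freeness for the commuting fields \(\partial_u,\partial_{y^a}\) places their common covariant derivative in \(\mathcal N\cap\mathcal T=0\); then the \(u\)-independence of the coefficients of \(s_a\) in the \(\partial_{y^b}\), together with the commutativity of the \(s_a\), lets you rectify by changing only the tangential variables. The only cosmetic difference is that you read off \(\partial_u h^b_a=0\) directly from the vanishing of \(\nabla_{\partial_u}\partial_{y^a}\), whereas the paper obtains the same fact for the inverse matrix by expanding \(\nabla_{\partial_u}s_a=0\).
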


\begin{proof}
Parallelism of the two distributions gives
\(\nabla_{\partial_u}\partial_{y^a}\in\mathcal T\) and
\(\nabla_{\partial_{y^a}}\partial_u\in\mathcal N\); the coordinate fields commute, so
torsion-freeness makes these two vectors equal, and their common value lies in
\(\mathcal N\cap\mathcal T=0\).

Write \(s_a=\sum_bc^b_a(u,y)\partial_{y^b}\).  By \eqref{eq:gauge-frame} and the
first assertion, \(0=\nabla_{\partial_u}s_a=\sum_b(\partial_uc^b_a)\partial_{y^b}\),
so the \(c^b_a\) depend only on \(y\).  The fields \(s_a\) commute, since
\([s_a,s_b]=\nabla_{s_a}s_b-\nabla_{s_b}s_a=0\) on \(U\setminus D\) and the identity
extends across \(D\); a commuting holomorphic frame is a coordinate frame, and since
its coefficients depend only on \(y\) the rectification changes only the tangential
variables.  In the new coordinates \(s_a=\partial_{y^a}\), and \eqref{eq:gauge-frame}
gives \(\nabla\partial_{y^a}=0\).
\end{proof}

\begin{proposition}[Local product decomposition]\label{prop:connection-product}
In the coordinates of Lemma~\ref{lem:mixed-vanishing} there is a holomorphic
function \(q\) of one variable with
\begin{equation}\label{eq:one-dimensional-factor}
  \nabla_{\partial_u}\partial_u=\Bigl(\frac{\lambda}{u}+q(u)\Bigr)\partial_u,
\end{equation}
all other covariant derivatives of the coordinate fields being zero.
\end{proposition}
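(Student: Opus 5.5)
We work in the coordinates \((u,y^2,\dots,y^n)\) of Lemma~\ref{lem:mixed-vanishing}, in which \(\nabla\partial_{y^a}=0\) for all \(a\ge2\). By torsion-freeness this also gives \(\nabla_{\partial_{y^a}}\partial_u=\nabla_{\partial_u}\partial_{y^a}=0\). The only covariant derivative left to determine is therefore \(\nabla_{\partial_u}\partial_u\).

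The first step is to show that \(\nabla_{\partial_u}\partial_u\) is a multiple of \(\partial_u\). The vector field \(\partial_u\) is a holomorphic section of \(\mathcal N=\cO s_1\), so \(\partial_u=f\,s_1\) for a nowhere vanishing holomorphic function \(f\). Then \eqref{eq:gauge-frame} gives
\[
  \nabla\partial_u=\Bigl(\frac{df}{f}+\lambda\frac{dz^1}{z^1}\Bigr)\otimes\partial_u .
\]
Hence \(\nabla\partial_u\) takes values in \(\mathcal N\). In particular \(\nabla_{\partial_u}\partial_u=h\,\partial_u\), where
\[
  h=\partial_u\log f+\lambda\,\partial_u\log z^1 .
\]
Moreover \(\nabla_{\partial_{y^a}}\partial_u=0\) forces the connection form above to annihilate \(\partial_{y^a}\), so
\[
  \partial_{y^a}\log f+\lambda\,\partial_{y^a}\log z^1=0 .
\]

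The second step is to show that \(h\) depends only on \(u\). Flatness gives
\[
  0=R(\partial_{y^a},\partial_u)\partial_u=\nabla_{\partial_{y^a}}(h\partial_u)-\nabla_{\partial_u}\nabla_{\partial_{y^a}}\partial_u=(\partial_{y^a}h)\,\partial_u .
\]
So \(h=h(u)\) is a meromorphic-type function of \(u\) alone on \(U\setminus D\). It is holomorphic away from \(u=0\), because both \(u\) and \(z^1\) are defining functions of \(D\).

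The third step is to identify the pole of \(h\). Since \(D=\{u=0\}=\{z^1=0\}\), we can write \(z^1=u\,e(u,y)\) with \(e\) a unit. Then
\[
  h=\frac{\lambda}{u}+\lambda\,\partial_u\log e+\partial_u\log f .
\]
The last two terms are holomorphic across \(D\), so \(q(u):=h-\lambda/u\) is holomorphic. It also depends only on \(u\), by the second step. This proves \eqref{eq:one-dimensional-factor}.

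The only point needing care is the bookkeeping that \(\partial_u\) spans \(\mathcal N\) with a nonvanishing coefficient, which is precisely the content of Lemma~\ref{lem:product-coordinates}. A second point is that the residue of \(h\) is exactly \(\lambda\), and not shifted by an integer. This holds because \(f\) is a unit and \(z^1/u\) is a unit, so neither contributes a pole.

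I expect no serious obstacle. The mildly delicate step is justifying that \(h\) is independent of \(y\) via the curvature identity, which is valid on \(U\setminus D\) and then extends by continuity.
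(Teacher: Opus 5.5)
Your proof is correct and follows the paper's argument. You show \(\nabla_{\partial_u}\partial_u\in\mathcal N\), use flatness together with \(\nabla_{\partial_{y^a}}\partial_u=0\) to remove the \(y\)-dependence, and identify the pole as \(\lambda/u\). The only difference is in the last step: the paper appeals to invariance of the residue trace under holomorphic changes of frame and defining equation, while you verify this directly by writing \(\partial_u=f\,s_1\) and \(z^1=u\,e\) with \(f,e\) units, which is a valid and slightly more self-contained substitute.
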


\begin{proof}
Since \(\mathcal N\) is parallel and generated by \(\partial_u\), we have
\(\nabla_{\partial_u}\partial_u=\gamma\,\partial_u\) for a meromorphic \(\gamma\).  By
Lemma~\ref{lem:mixed-vanishing}, \(\nabla_{\partial_{y^a}}\partial_u=0\), so flatness
gives
\(0=\nabla_{\partial_{y^a}}\nabla_{\partial_u}\partial_u
   -\nabla_{\partial_u}\nabla_{\partial_{y^a}}\partial_u
  =(\partial_{y^a}\gamma)\partial_u\),
and \(\gamma\) depends only on \(u\).  In the frame
\((\partial_u,\partial_{y^2},\dots,\partial_{y^n})\) all entries of the connection
matrix vanish except the one in the \(\partial_u\) slot, which is \(\gamma\,du\);
hence \(\nabla\) is logarithmic along \(\{u=0\}\) with residue
\((u\gamma)|_{u=0}E_{11}\).  Its trace is \(\lambda\) by
the invariance of the residue trace under holomorphic changes of coordinates, so
\(\gamma=\lambda/u+q\) with \(q\) holomorphic.
\end{proof}

\subsection{The one-dimensional case}

The next result is the affine form of the classical normal coordinate for a flat cone; compare
\cite[Proposition~2]{Troyanov1986} and, for the resonant alternatives,
\cite[Proposition~3.1]{DeroinGuillot2023}.  

\begin{proposition}\label{prop:one-dimensional-normal-form}
Let \(\nabla\) be a logarithmic connection on the tangent bundle of a disc, written
in a coordinate \(u\) as
\(\nabla_{\partial_u}\partial_u=\bigl(\lambda u^{-1}+q(u)\bigr)\partial_u\) with \(q\)
holomorphic.  If \(\lambda\notin\{-1,-2,-3,\dots\}\), there is a holomorphic
coordinate \(w\) with \(w(0)=0\) and \(w'(0)\neq0\) such that
\(\nabla_{\partial_w}\partial_w=\lambda w^{-1}\partial_w\).
\end{proposition}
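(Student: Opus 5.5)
Under a change of coordinate \(u=u(w)\), the Christoffel symbol transforms as
\[
  \Gamma_w=\Gamma_u(u(w))\,u'(w)+\frac{u''(w)}{u'(w)},
\]
where \(\Gamma_u=\lambda u^{-1}+q(u)\) and \(\nabla_{\partial_w}\partial_w=\Gamma_w\partial_w\).  We want \(\Gamma_w=\lambda/w\).  The plan is to convert this into a first-order linear condition on the inverse map.  It is convenient to use the flat coordinate: a local developing map \(\Phi\) with \(\nabla d\Phi=0\).  A function \(\Phi\) is affine for \(\nabla\) exactly when
\[
  \Phi''=\Gamma\,\Phi' .
\]
For the model connection \(\lambda w^{-1}\) the affine functions are spanned by \(1\) and \(w^{\lambda+1}\).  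So the task is to write the affine functions of the given connection in this form.

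Solving \(\Phi''=\Gamma_u\Phi'\) gives
\[
  \Phi'=\exp\Bigl(\int\Gamma_u\Bigr)=u^\lambda e^{Q(u)},\qquad Q(u)=\int_0^u q,
\]
which is multivalued but has a well-defined branch up to a constant factor.  Write \(e^{Q(u)}=\sum_{k\ge0}b_ku^k\) with \(b_0=1\), and integrate term by term:
\[
  \Phi(u)=\sum_{k\ge0}\frac{b_k}{\lambda+1+k}\,u^{\lambda+1+k}=u^{\lambda+1}h(u).
\]
This step is legitimate precisely because \(\lambda+1+k\neq0\) for all \(k\ge0\), which is the hypothesis \(\lambda\notin\{-1,-2,\dots\}\).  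The resulting \(h\) is holomorphic, with \(h(0)=1/(\lambda+1)\neq0\) and the same radius of convergence as \(e^{Q}\).  This is the main point where the hypothesis enters, and also where it could fail in resonant cases.

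Next, choose a holomorphic branch of \(\bigl((\lambda+1)h\bigr)^{1/(\lambda+1)}\) near \(0\), which is possible since \((\lambda+1)h(0)=1\), and set
\[
  w=u\,\bigl((\lambda+1)h(u)\bigr)^{1/(\lambda+1)}.
\]
Then \(w(0)=0\), \(w'(0)=1\), and
\[
  w^{\lambda+1}=(\lambda+1)\Phi
\]
for a compatible choice of branches.  Since \(\Phi\) is affine for \(\nabla\), so is \(w^{\lambda+1}\).  Computing \(\Gamma_w\) from \(\Gamma_w=\Phi_{ww}/\Phi_w\) applied to \(\Phi=w^{\lambda+1}/(\lambda+1)\) gives \(\Gamma_w=\lambda/w\), as required.

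The only delicate point is the multivaluedness of \(u^\lambda\) and \(w^{\lambda+1}\).  I would handle it by working on a slit disc or sector, where these functions are single valued.  The identity \(\Gamma_w=\lambda/w\) is local and involves only single-valued holomorphic quantities, namely \(h\) and \(w\).  It therefore holds on all of the punctured disc, and hence in the sense of logarithmic connections.

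When \(\lambda=0\) the connection is holomorphic and the same formula gives the usual flat coordinate, so the argument covers that case as well and no separate treatment is needed.
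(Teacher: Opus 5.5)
Your proof is correct and is essentially the paper's argument. Your $(\lambda+1)h$ is exactly the paper's $f$ with $\mu=\lambda+1$, so your $w$ coincides with theirs, and your identity $w^{\lambda+1}=(\lambda+1)\Phi$ is the integrated form of the paper's $w^{\mu-1}\,dw=u^{\mu-1}e^{Q}\,du$. The only difference is presentation: you check $\Gamma_w=\lambda/w$ via the affine-function characterization $\Gamma_w=\Phi_{ww}/\Phi_w$, while the paper takes the logarithmic derivative and uses the transformation law.
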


\begin{proof}
Put \(\mu=1+\lambda\), so that
\(\mu\notin\{0,-1,-2,\dots\}\).  Let \(Q\) be a holomorphic primitive of
\(q\), write \(e^{Q(u)}=\sum_{k\ge0}a_ku^k\) with \(a_0\neq0\), and set
\[
  b_k=\frac{\mu}{\mu+k}a_k,\qquad f(u)=\sum_{k\ge0}b_ku^k .
\]
The hypothesis on \(\mu\) makes every denominator nonzero and the multipliers
bounded, so \(f\) converges on the same disc and \(f(0)=a_0\neq0\); comparing
coefficients gives
\begin{equation}\label{eq:f-ode}
  f(u)+\frac{u}{\mu}f'(u)=e^{Q(u)} .
\end{equation}
After shrinking the disc, choose a branch of \(f^{1/\mu}\) and set
\(w=uf(u)^{1/\mu}\), so that \(w'(0)=f(0)^{1/\mu}\neq0\).  Differentiating
\(w^\mu=u^\mu f(u)\) on the universal cover of the punctured disc and using
\eqref{eq:f-ode} yields
\begin{equation}\label{eq:differential-identity}
  w^{\mu-1}\,dw=u^{\mu-1}e^{Q(u)}\,du .
\end{equation}
Writing \(p(u)=\lambda u^{-1}+q(u)\), the transformation law for
\(\partial_w=(w')^{-1}\partial_u\) gives
\(\nabla_{\partial_w}\partial_w=\bigl(p/w'-w''/(w')^2\bigr)\partial_w\), while the
logarithmic derivative of \eqref{eq:differential-identity} gives
\((\mu-1)w'/w+w''/w'=p\).  Substituting proves the claim.
\end{proof}

\begin{proof}[Proof of Theorem~\ref{thm:connection-normal-form}]
By Lemma~\ref{lem:product-coordinates},
Lemma~\ref{lem:mixed-vanishing} and Proposition~\ref{prop:connection-product} there
are coordinates \((u,y^2,\dots,y^n)\) with \(D=\{u=0\}\) in which
\eqref{eq:one-dimensional-factor} holds and all other covariant derivatives of
coordinate fields vanish.  Since \(\lambda\notin\Z\),
Proposition~\ref{prop:one-dimensional-normal-form} applies and replaces \(u\) by a
coordinate \(w^1\) with
\(\nabla_{\partial_{w^1}}\partial_{w^1}=\lambda(w^1)^{-1}\partial_{w^1}\).
Keeping \(w^a=y^a\) and noting that \(w^1\) depends only on \(u\), the mixed and
tangential covariant derivatives remain zero.
\end{proof}

\section{Proof of the classification in the flat case}\label{sec:classification}

\begin{proof}[Proof of Theorem~\ref{thm:main}(i)]
Let \(\nabla\) be the Levi--Civita connection of \(g\).  By
Theorem~\ref{thm:log-extension} it extends to a flat torsion-free logarithmic
connection on \(T^{1,0}B\) with \(\lambda=\tr\Res_D(\nabla)=\beta-1\in(-1,0)\), so
\(\lambda\notin\Z\) and Theorem~\ref{thm:connection-normal-form} provides
holomorphic coordinates \(w\), centred at the origin with \(D=\{w^1=0\}\), in which
\begin{equation}\label{eq:normal-christoffel}
  \Gamma^1_{11}=\frac{\beta-1}{w^1},
  \qquad
  \Gamma^k_{ij}=0\quad\text{for all other triples }(i,j,k).
\end{equation}

Write \(g=g_{i\bar j}\,dw^id\bar w^j\).  The coordinate fields
\(\partial_{w^a}\), \(a\ge2\), are \(\nabla\)-parallel, so \(\nabla g=0\) gives
\(g_{a\bar b}=P_{a\bar b}\) for a constant positive definite Hermitian matrix \(P\).

At any point of \(B\setminus D\), the holonomy of \(\nabla\) around a positive loop
about \(D\) fixes the fields \(\partial_{w^a}\) and multiplies \(\partial_{w^1}\) by
\(e^{-2\pi\ii(\beta-1)}\neq1\).  Since the holonomy preserves \(g\), the normal and
tangential directions are orthogonal.  Hence \(g_{1\bar a}=g_{a\bar1}=0\).  Finally,
metric compatibility and \eqref{eq:normal-christoffel} give
\[
  \partial_{w^a}g_{1\bar1}=\partial_{\bar w^a}g_{1\bar1}=0\quad(a\ge2),
  \qquad
  \partial_{w^1}g_{1\bar1}=\frac{\beta-1}{w^1}g_{1\bar1},
  \qquad
  \partial_{\bar w^1}g_{1\bar1}=\frac{\beta-1}{\bar w^1}g_{1\bar1}.
\]
Thus \(g_{1\bar1}\) depends only on \(w^1\) and \(\bar w^1\), and the last two
identities give
\[
  d\bigl(|w^1|^{2-2\beta}g_{1\bar1}\bigr)=0.
\]
Equivalently, on any sector where a branch is chosen,
\((w^1)^{1-\beta}\partial_{w^1}\) is parallel and the expression in parentheses is
its squared norm.  Hence \(g_{1\bar1}=t\,|w^1|^{2\beta-2}\) for a constant \(t>0\),
and therefore
\[
  g=t\,|w^1|^{2\beta-2}|dw^1|^2+P_{a\bar b}\,dw^ad\bar w^b .
\]
A constant complex linear change of the tangential coordinates turns \(P\) into the
identity, and replacing \(w^1\) by \(t^{1/(2\beta)}w^1\) turns the first summand into
\(|w^1|^{2\beta-2}|dw^1|^2\).  This is \eqref{eq:main-flat}.
\end{proof}

\section{Proof of the classification in nonzero curvature}\label{sec:jet}

Let \(g\) be as in Theorem~\ref{thm:main} with \(c\neq0\), and set
\(\sigma=\operatorname{sgn}c\).  After multiplying \(g\) by a positive constant,
we may assume that its holomorphic sectional curvature is \(2\sigma\).  Let
\(\omega\) be its K\"ahler form and choose the polyhomogeneous K\"ahler potential
\(\varphi\) in Definition~\ref{def:polyhomogeneous}.  Put
\(h_\sigma=e^{-\sigma\varphi}\), regarded as a Hermitian metric on the trivial
holomorphic line bundle \(\mathcal O_B\), and let \(\nabla^{h_\sigma}\) be its Chern
connection.

We briefly recall the bundle of 1-jets in this setting.  Two germs of holomorphic
functions at \(p\) define the same element of \(J^1(\mathcal O_B)_p\) if they have
the same value and first derivative at \(p\).  We may therefore identify
\(J^1(\mathcal O_B)_p\) with \(\C\oplus T_p^{*1,0}B\), under which
\(j_p(f)=(f(p),df_p)\); in particular, \(J^1(\mathcal O_B)\) has rank \(n+1\).  It
fits into the exact sequence
\(0\to T^{*1,0}B\xrightarrow{\iota}J^1(\mathcal O_B)
\xrightarrow{\pi}\mathcal O_B\to0\), where \(\iota(\alpha)=(0,\alpha)\) and
\(\pi(a,\alpha)=a\).  Equivalently, \(\iota_p(\alpha)=j_p(f)\) for any holomorphic
function \(f\) such that \(f(p)=0\) and \(df_p=\alpha\).

For a holomorphic function \(f\), let \(j(f)\) be the holomorphic section
\(p\mapsto j_p(f)\) of \(J^1(\mathcal O_B)\); thus \(j(f)=(f,df)\).  The map \(j\)
is a first-order differential operator, not a vector-bundle map
\(\mathcal O_B\to J^1(\mathcal O_B)\): we have \(\pi(j(f))=f\), but \(j\) is not
\(\mathcal O_B\)-linear.

Set \(E=J^1(\mathcal O_B)\) and define the Hermitian form \(\mathcal H\) on \(E\) by
\begin{equation}\label{eq:jet-hermitian-form}
  |j(f)|^2_{\mathcal H}
  =|f|^2_{h_\sigma}
   +\sigma|\nabla^{h_\sigma}f|^2_{g\otimes h_\sigma}.
\end{equation}
For \(\sigma=1\), this is the Hermitian metric of
\cite[Definition~4.1]{deBorbon2025}; for \(\sigma=-1\), it is the Hermitian form of
\cite[Definition~4.4]{deBorbon2025}. The form \(\mathcal H\) is positive definite when
\(\sigma=1\) and has signature \((1,n)\) when \(\sigma=-1\).  By
\cite[Proposition~6.1]{deBorbon2025}, its Chern connection \(\mathcal D\) is flat on
\(B\setminus D\).  The holomorphic bundle \(E\) is defined on all of \(B\), whereas
\(h_\sigma\) is smooth only on \(B\setminus D\) and \(\mathcal H\) and \(\mathcal D\)
are initially defined there.  The content of
Theorem~\ref{thm:jet-log-normal-form} is that \(\mathcal D\) extends across \(D\) as
a logarithmic connection.

We use the holomorphic frame
\(e=(e_0,e_1,\dots,e_n)=(j(1),j(z^1),\dots,j(z^n))\) of \(E\).  Indeed,
\(j(1)=(1,0)\) and \(j(z^i)=(z^i,dz^i)\), so these sections form a frame.  In this
frame,
\begin{equation}\label{eq:first-jet-in-frame}
  j(f)=\left(f-\sum_{i=1}^nz^i\partial_if\right)j(1)
       +\sum_{i=1}^n(\partial_if)j(z^i).
\end{equation}
We index the entries of endomorphisms of \(E\) by \(0,1,\dots,n\).  Thus, in this
section, \(E_{11}=\operatorname{diag}(0,1,0,\dots,0)\), with its \(1\) in the
second diagonal position, corresponding to the \(j(z^1)\) slot.
This differs from the \(n\times n\) matrix unit used in
Section~\ref{sec:normal-form}.  More generally, \(E_{ij}\) denotes the standard
matrix unit, with indices \(0,\dots,n\).  Connection matrices are defined by
\(\mathcal De=e\,A\), so that \(\mathcal D=d+A\).

\subsection{Estimate for the Chern connection in cone coordinates}

On a simply connected sector where \(\zeta\) is defined, consider the holomorphic
frame \(\widetilde e=\bigl(j(1),j(\zeta),j(z^2),\dots,j(z^n)\bigr)\), obtained from
\(e\) by replacing \(j(z^1)\) with \(j(\zeta)\).  Let \(S\) be the corresponding
change-of-frame matrix, so that \(\widetilde e=eS\).

\begin{lemma}\label{lem:jet-cone-estimate}
The connection matrix \(\widetilde A\) of \(\mathcal D\) in the frame
\(\widetilde e\) satisfies
\begin{equation}\label{eq:jet-cone-estimate}
  \widetilde A=O(\rho^{-\beta+\varepsilon'}),
  \qquad
  \varepsilon'=\beta\varepsilon>0 .
\end{equation}
\end{lemma}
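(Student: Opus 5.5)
The plan is to compute \(\widetilde A\) directly from the Chern connection formula \(\widetilde A=\widetilde H^{-1}\partial\widetilde H\). Here \(\widetilde H\) is the Gram matrix of \(\mathcal H\) in the frame \(\widetilde e\). Every entry of \(\widetilde H\) will be written through \(\varphi\), its first derivatives and the metric matrix \(G\) in cone coordinates, and then bounded with Lemma~\ref{lem:scale-estimates}.

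\emph{Step 1: the Gram matrix.} In cone coordinates \((\zeta,z')\), write \(x^0\) for the constant, \(x^1=\zeta\) and \(x^a=z^a\). Because \(\nabla^{h_\sigma}f=df-\sigma f\,\partial\varphi\), polarising \eqref{eq:jet-hermitian-form} gives
\[
  \widetilde H_{\alpha\bar\beta}=e^{-\sigma\varphi}\Bigl(x^\alpha\bar x^\beta+\sigma\,G^{i\bar j}\bigl(\partial_i x^\alpha-\sigma x^\alpha\partial_i\varphi\bigr)\overline{\bigl(\partial_j x^\beta-\sigma x^\beta\partial_j\varphi\bigr)}\Bigr).
\]
All derivatives are taken in cone coordinates. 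In this formula \(x^\alpha\) and \(\partial_i x^\alpha\) are bounded, with \(|\zeta|=r\) small. By Lemma~\ref{lem:scale-estimates}, \(\varphi\), \(\partial\varphi\), \(G\) and \(G^{-1}\) are also bounded. Hence \(\widetilde H=O(1)\).

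\emph{Step 2: derivatives.} For a holomorphic derivative \(\partial_k\), with \(k\) ranging over \(\zeta\) and \(z^a\), the factor \(\partial_k x^\alpha\) and its derivatives are \(O(1)\), since the \(x^\alpha\) are affine. The only singular contributions are \(\partial\partial\varphi\) and \(\partial G^{-1}=-G^{-1}(\partial G)G^{-1}\). Lemma~\ref{lem:scale-estimates} bounds both by \(O(r^{\varepsilon-1})\). Therefore \(\partial\widetilde H=O(r^{\varepsilon-1})\).

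\emph{Step 3: invertibility, the main obstacle.} We need a uniform bound \(\widetilde H^{-1}=O(1)\), although \(\mathcal H\) is indefinite when \(\sigma=-1\). To get it, factor \(\widetilde H=P^*MP\). Here \(P\) is the unipotent change to the frame \(\{j(1),\ \iota(dx^i)\}\), whose entries are \(x^\alpha\) and are bounded; so \(P\) and \(P^{-1}\) are \(O(1)\). The middle matrix \(M\) is block-diagonal, up to the factor \(e^{-\sigma\varphi}\), with a \(1\times1\) block built from \(\varphi\) and \(\partial\varphi\) and an \(n\times n\) block built from \(\sigma G^{-1}\), after twisting by \(\partial\varphi\). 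More concretely, the Hermitian form is \(|a|^2+\sigma|\alpha-\sigma a\,\partial\varphi|^2_{G}\) up to the factor \(e^{-\sigma\varphi}\). This is a triangular transform of \(\operatorname{diag}(1,\sigma G^{-1})\) by a matrix whose entries are \(\partial\varphi\). Such a transform has determinant \(\pm e^{-(n+1)\sigma\varphi}\det G^{-1}\), which is bounded away from \(0\) by \(\Lambda^{-1}\Id\le G\le\Lambda\Id\) and the boundedness of \(\varphi\). Its inverse is explicit and bounded. Hence \(\widetilde H^{-1}=O(1)\).

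\emph{Conclusion.} Combining the steps gives \(\widetilde A=\widetilde H^{-1}\partial\widetilde H=O(r^{\varepsilon-1})\). Rewriting through \eqref{eq:r-rho}, this is \(O(\rho^{\beta\varepsilon-\beta})\), the bound \eqref{eq:jet-cone-estimate}. The components are measured against the coframe \(d\zeta,dz^a\), as in \eqref{eq:cone-christoffel}. These estimates do not depend on the branch of \(\zeta\), because changing branch multiplies \(\zeta\) by a unimodular constant.
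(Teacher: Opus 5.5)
Your proposal is correct and follows essentially the same route as the paper. You bound $\partial\mathcal H$ by $O(r^{\varepsilon-1})$ using Lemma~\ref{lem:scale-estimates}, and you bound $\mathcal H^{-1}$ by factoring through the splitting $j(f)\mapsto(f,\nabla^{h_\sigma}f)$: two unipotent matrices with bounded entries ($\zeta,z^a$ and $\partial\varphi$) on either side of a block-diagonal form built from $h_\sigma$ and $\sigma g^{-1}$. The one point left implicit in your Step~2 is that $\partial_i\overline{\partial_j\varphi}=g_{i\bar j}$ is bounded, which is why the antiholomorphic factor contributes nothing singular.
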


\begin{proof}
Compatibility of the Chern connection with \(\mathcal H\), together with
\(\bar\partial\widetilde e=0\), gives
\(\partial\mathcal H=\widetilde B\,\mathcal H\) with
\(\widetilde B=\widetilde A^{\mathsf T}\), whence
\begin{equation}\label{eq:chern-from-H}
  |\widetilde A|\le\|\mathcal H^{-1}\|\;|\partial\mathcal H| ,
\end{equation}
all norms being taken in the cone coordinates.  It suffices therefore to bound
\(\mathcal H^{-1}\) and \(\partial\mathcal H\).

In the cone coordinates the entries of \(\mathcal H\) in the frame \(\widetilde e\)
are universal algebraic expressions in the coordinates \((\zeta,z')\), in
\(h_\sigma=e^{-\sigma\varphi}\), in the components of \(g\) and \(g^{-1}\), and in
those of \(\partial\varphi\) and \(\overline{\partial\varphi}\); indeed
\(\nabla^{h_\sigma}f=\partial f-\sigma f\,\partial\varphi\) for a holomorphic \(f\),
and \(\widetilde e\) consists of the jets of \(1,\zeta,z^a\).  All of these are
bounded by Lemma~\ref{lem:scale-estimates}, and each has \(\partial\)-derivative
\(O(r^{\varepsilon-1})\): the coordinates because their derivatives are constant,
\(h_\sigma\) because \(|\partial\varphi|\le\Lambda\), the components of
\(g^{\pm1}\) by
Lemma~\ref{lem:scale-estimates}\ref{it:reg-metric}, those of
\(\partial\varphi\) by Lemma~\ref{lem:scale-estimates}\ref{it:reg-potential}, and
those of its conjugate by Lemma~\ref{lem:scale-estimates}\ref{it:reg-metric} again,
since
\(\partial_i\overline{\partial_j\varphi}=g_{i\bar j}\).  Hence
\(|\partial\mathcal H|=O(r^{\varepsilon-1})\).

For the uniform nondegeneracy, use the smooth splitting of the first jet exact
sequence induced by the Chern connection \(\nabla^{h_\sigma}\), in which
\(j(f)\mapsto(f,\nabla^{h_\sigma}f)\).  In this splitting \(\mathcal H\) is block
diagonal, with blocks \(h_\sigma\) and
\(\sigma(g^{-1}\otimes h_\sigma)\).  The matrix of \(\widetilde e\) in the splitting
has columns \((1,-\sigma\partial\varphi)\),
\((\zeta,d\zeta-\sigma\zeta\,\partial\varphi)\) and
\((z^a,dz^a-\sigma z^a\,\partial\varphi)\); subtracting \(\zeta\), respectively
\(z^a\), times the first column from the others turns it into
\(\left(\begin{smallmatrix}1&0\\-\sigma\partial\varphi&\Id\end{smallmatrix}\right)\).
It is therefore a product of two unipotent matrices whose entries are \(\zeta\),
\(z^a\) and \(\partial\varphi\), all bounded; since \(h_\sigma\) and
\(g^{\pm1}\) are bounded, so are \(\mathcal H\) and \(\mathcal H^{-1}\).  Now
\eqref{eq:chern-from-H} gives \(\widetilde A=O(r^{\varepsilon-1})\), and
\(r=\rho^\beta/\beta\) turns this into \eqref{eq:jet-cone-estimate}.
\end{proof}

\subsection{Logarithmic extension and gauge normal form}

\begin{theorem}\label{thm:jet-log-normal-form}
The connection \(\mathcal D\) extends across \(D\) as a flat logarithmic connection on
\(E\).  Its residue \(\mathcal R=\Res_D(\mathcal D)\) has rank one and satisfies
\begin{equation}\label{eq:jet-residue-polynomial}
  \mathcal R^2=(1-\beta)\mathcal R,
  \qquad
  \operatorname{Spec}(\mathcal R)=\{1-\beta,0,\dots,0\},
\end{equation}
and after shrinking \(B\) there is a holomorphic frame of \(E\) in which
\begin{equation}\label{eq:jet-log-normal-form}
  \mathcal D=d+(1-\beta)E_{11}\frac{dz^1}{z^1}.
\end{equation}
\end{theorem}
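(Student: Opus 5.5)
The proof mirrors the argument of Theorem~\ref{thm:log-extension}, with the frame \(e\) replacing the coordinate frame and Lemma~\ref{lem:jet-cone-estimate} replacing \eqref{eq:cone-christoffel}.

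\emph{Step 1: transfer to the frame \(e\).} Since \(\mathcal D\) is flat, it is in particular a holomorphic connection off \(D\). Indeed, it is the Chern connection of a holomorphic bundle, so its \((0,1)\)-part is \(\bar\partial\) and \(A\) is of type \((1,0)\); flatness then gives \(\bar\partial A=0\), so \(A\) is a holomorphic matrix of \((1,0)\)-forms on \(B\setminus D\).

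Next compute the change of frame. From \eqref{eq:first-jet-in-frame} with \(f=\zeta\),
\[
j(\zeta)=\bigl(\zeta-z^1\partial_1\zeta\bigr)j(1)+(z^1)^{\beta-1}j(z^1)=(z^1)^\beta\Bigl(\tfrac1\beta-1\Bigr)j(1)+(z^1)^{\beta-1}j(z^1).
\]
So \(S\) differs from the identity only in the column of the \(j(z^1)\) slot, and
\[
A=S\widetilde AS^{-1}-dS\,S^{-1}.
\]
The entries of \(S\) are \(O(\rho^{\beta-1})\), and those of \(S^{-1}\) are \(O(\rho^{1-\beta})\) or bounded. The term \(dS\,S^{-1}\) is computed explicitly. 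Since \(d\bigl((z^1)^{\beta-1}\bigr)\cdot(z^1)^{1-\beta}=(\beta-1)\,dz^1/z^1\), it contributes \((1-\beta)E_{11}\,dz^1/z^1\) with the sign fixed by the convention \(A=-dS\,S^{-1}+\cdots\), together with an entry of size \(\rho^{\beta-1}\cdot\rho^{1-\beta}/\rho\) in the \((0,1)\) position times \(dz^1\). I will compute this entry exactly; it is a multiple of \((z^1)^{\beta}\cdot(z^1)^{1-\beta}/z^1=\mathrm{const}\), which is harmless. One must check that the trace is \(+(1-\beta)\), consistent with \eqref{eq:jet-residue-polynomial}.

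For the conjugated term, the \(dz^1\)- and \(dz^a\)-components of \(\widetilde A\) are expressed through \(d\zeta=(z^1)^{\beta-1}dz^1\), which costs a factor \(\rho^{\beta-1}\) in the \(dz^1\) coefficient. Combined with \eqref{eq:jet-cone-estimate} and the weights \(\rho^{\pm(1-\beta)}\) from \(S^{\pm1}\), each entry is bounded as follows: the \(dz^a\)-coefficients are \(O(\rho^{-\ell_0})\) with \(\ell_0<1\), and the \(dz^1\)-coefficients are \(O(\rho^{-\ell_1})\) with \(\ell_1<2\). The worst case is \(\rho^{-\beta+\varepsilon'}\cdot\rho^{\beta-1}\cdot\rho^{\beta-1}=\rho^{-2+\beta+\varepsilon'}\). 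Lemma~\ref{lem:extension} with \(m=0\), respectively \(m=1\), then shows that \(\mathcal D\) is logarithmic on all of \(B\). Flatness extends by continuity.

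\emph{Step 2: the residue.} The residue is \(z^1A_1|_D\). The contribution of the conjugated term to \(z^1A_1\) is \(O(\rho^{\varepsilon'})\) in every entry except possibly those where two factors \(\rho^{\beta-1}\) occur, namely the entries \((k,1)\). I will compute them carefully. The goal is to show that \(\mathcal R\) equals \((1-\beta)E_{11}\) plus a matrix supported in the column of the \(j(z^1)\) slot, with vanishing diagonal entry there. This is the analogue of \eqref{eq:residue-block}.

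Such a matrix has rank one and satisfies \(\mathcal R^2=(1-\beta)\mathcal R\), since its only nonzero column is an eigenvector. Hence \(\operatorname{Spec}\mathcal R=\{1-\beta,0,\dots,0\}\), and \(\mathcal R\) is diagonalizable.

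\emph{Step 3: the normal form.} Because \(1-\beta\notin\Z\), the eigenvalue differences are nonresonant. Theorem~\ref{thm:gauge} then gives, after shrinking \(B\), a frame with \(\mathcal D=d+R_0\,dz^1/z^1\) and \(R_0\) diagonal and conjugate to \(\mathcal R\). Permuting the frame gives \eqref{eq:jet-log-normal-form}.

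The main obstacle is Step 2. Unlike the tangent case, there is no torsion-freeness available to kill the other columns. I therefore expect to need genuine bookkeeping of which entries of \(\widetilde A\) are multiplied by the double weight \(\rho^{2\beta-2}\), in order to see that only the \(j(z^1)\)-column survives at order \(\rho^{-1}\). If that bookkeeping fails, the fallback is to argue spectrally. The residue eigenvalues are constant along \(D\), and its exponential is conjugate to the local monodromy. That monodromy is computable from the frame \(\widetilde e\): continuation of \(\zeta\) around \(D\) multiplies \(j(\zeta)\) by \(e^{2\pi\ii\beta}\), and \(\widetilde A\) is integrable, so the monodromy is conjugate to \(\operatorname{diag}(1,e^{2\pi\ii\beta},1,\dots)\). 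This forces eigenvalues \(\{1-\beta\ \text{or}\ {-\beta},\,0,\dots\}\) modulo integers, and the trace computed in Step 1 pins them down.
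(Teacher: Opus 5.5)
Your Steps 1 and 3 follow the paper's argument. That means the gauge formula \(A=S\widetilde AS^{-1}-dS\,S^{-1}\) with \(-dS\,S^{-1}=(1-\beta)E_{11}\,dz^1/z^1-(1-\beta)E_{01}\,dz^1\), pole exponents below \(1\) and \(2\) fed into Lemma~\ref{lem:extension}, and then Theorem~\ref{thm:gauge}. The problem is Step 2: the residue shape you aim for is the transpose of the correct one, and it is false in general.

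The only large entry of \(S\) is \(S_{11}=(z^1)^{\beta-1}\). In \(S\widetilde AS^{-1}\) it multiplies from the left, so it weights \emph{row}~\(1\), not column~\(1\). Column~\(1\) of \(S^{-1}\) is \(\bigl(-\tfrac{1-\beta}{\beta}z^1,(z^1)^{1-\beta},0,\dots,0\bigr)^{\mathsf T}\). This is small, and it exactly absorbs the factor \((z^1)^{\beta-1}\) coming from \(d\zeta\). Concretely, in the \(dz^1\)-coefficient:
\begin{itemize}
\item the entries \((i,1)\) with \(i\neq1\) are \(O(\rho^{-\beta+\varepsilon'})\);
\item the entry \((1,1)\) and the entries \((i,j)\) with \(i,j\neq1\) are \(O(\rho^{-1+\varepsilon'})\);
\item only the entries \((1,j)\) with \(j\neq1\) are \(O(\rho^{-(2-\beta-\varepsilon')})\).
\end{itemize}
Hence \(\mathcal R=(1-\beta)E_{11}+\sum_{j\neq1}v_jE_{1j}\). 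The off-diagonal entries of column~\(1\), which you expected to survive, are exactly the ones that vanish.

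The \(v_j\) cannot be dropped either. Take the model \(F^*\omega_\sigma\) with its standard potential, in the coordinates \(w\) of Theorem~\ref{thm:main}. There \(\widetilde A=0\) and the residue is \((1-\beta)E_{11}\). Now pass to \(z^1=w^1\), \(z^a=w^a+w^1\). Then \(j(z^a)=j(w^a)+j(w^1)\), so \(e=e^{w}T\) with \(T=\Id+\sum_aE_{1a}\). The residue in the frame \(e\) becomes \((1-\beta)\bigl(E_{11}+\sum_aE_{1a}\bigr)\), which is not supported in column~\(1\). The analogy with \eqref{eq:residue-block} misleads you because \(E\) contains \(T^{*1,0}B\) rather than \(T^{1,0}B\), and dualising transposes the residue.

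The repair is immediate, and it is what the paper does. A matrix whose only nonzero row is row~\(1\), with diagonal entry \(1-\beta\), has rank one. It satisfies \(\mathcal R^2=(1-\beta)\mathcal R\), since its nonzero row is a left eigenvector. So the spectrum and diagonalizability follow as you intended. The row structure also matters later: Lemma~\ref{lem:parallel-frame} relies on exactly this shape.

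Your spectral fallback cannot replace this bookkeeping. The monodromy determines the eigenvalues of \(\mathcal R\) only modulo \(\Z\), and the trace does not pin them down. For instance, \(\{2-\beta,-1,0,\dots,0\}\) has the same trace and the same monodromy, and it is resonant. Moreover, the fallback yields no diagonalizability, which Theorem~\ref{thm:gauge} requires.
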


\begin{proof}
Recall that \(S\) is the change-of-frame matrix defined by \(\widetilde e=eS\).
Applying \eqref{eq:first-jet-in-frame} to \(f=\zeta\) gives
\begin{equation}\label{eq:jet-frame-change}
  j(\zeta)=(1-\beta)\zeta\,j(1)+(z^1)^{\beta-1}j(z^1).
\end{equation}
Thus \(S\) is the identity except in its column \(1\), where
\(S_{01}=(1-\beta)\zeta\) and \(S_{11}=(z^1)^{\beta-1}\); correspondingly \(S^{-1}\)
is the identity except for \((S^{-1})_{01}=-\bigl((1-\beta)/\beta\bigr)z^1\) and
\((S^{-1})_{11}=(z^1)^{1-\beta}\).  In particular row \(1\) of \(S\) and column \(1\)
of \(S^{-1}\) carry the powers \(\rho^{\beta-1}\) and \(\rho^{1-\beta}\), and all
other entries are \(O(1)\) or \(O(\rho^\beta)\).  Since \(\widetilde e=eS\),
\begin{equation}\label{eq:jet-gauge-change}
  A=S\widetilde AS^{-1}-dS\,S^{-1}.
\end{equation}

A direct computation from \eqref{eq:jet-frame-change} gives
\begin{equation}\label{eq:jet-gauge-part}
  -dS\,S^{-1}=(1-\beta)E_{11}\frac{dz^1}{z^1}-(1-\beta)E_{01}\,dz^1 ,
\end{equation}
whose second term is holomorphic.  For the first term of
\eqref{eq:jet-gauge-change} we combine \eqref{eq:jet-cone-estimate} with the powers
just listed, remembering that \(d\zeta=(z^1)^{\beta-1}dz^1\).  Writing
\(S\widetilde AS^{-1}=\Psi_1\,dz^1+\sum_a\Psi_a\,dz^a\), the resulting pole exponents
are as follows: every entry of \(\Psi_a\) is \(O(\rho^{-\ell_T})\) with
\[
  \ell_T=\max\{\beta,\,1,\,2\beta-1\}-\varepsilon'<1 ,
\]
while for \(\Psi_1\)
\begin{equation}\label{eq:jet-entrywise}
  (\Psi_1)_{1j}=
  \begin{cases}
    O(\rho^{-(2-\beta-\varepsilon')}) & j\neq1,\\
    O(\rho^{-(1-\varepsilon')}) & j=1,
  \end{cases}
  \qquad
  (\Psi_1)_{ij}=O(\rho^{-(1-\varepsilon')})\ \ (i\neq1),
\end{equation}
all exponents being \(<2\), and \(<1\) except for the entries \((1,j)\) with
\(j\neq1\).

The matrix \(A\) is holomorphic on \(B\setminus D\), because \(\mathcal D\) is a
Chern connection in a holomorphic frame and its curvature \(\bar\partial A\)
vanishes.  Lemma~\ref{lem:extension} with \(m=0\) applied to the entries of
\(\Psi_a\), and with \(m=1\) applied to those of \(\Psi_1\), therefore shows that the
\(dz^a\)-coefficients of \(A\) extend holomorphically across \(D\) and that its
\(dz^1\)-coefficient has at most a simple pole.  Thus \(\mathcal D\) is logarithmic,
and its flatness extends across \(D\) because the meromorphic curvature vanishes on a
dense open set.

By \eqref{eq:jet-entrywise} the entries of \(z^1\Psi_1\) tend to zero on \(D\) except
possibly those in row \(1\).  With \eqref{eq:jet-gauge-part} this gives
\begin{equation}\label{eq:jet-residue-block}
  \mathcal R=(1-\beta)E_{11}+\sum_{j\neq1}v_jE_{1j}
\end{equation}
for holomorphic functions \(v_j\) on \(D\): all rows of \(\mathcal R\) vanish except
row \(1\), whose diagonal entry is \(1-\beta\).  Hence \(\mathcal R\) has rank one and
\(\mathcal R^2=(1-\beta)\mathcal R\), which is \eqref{eq:jet-residue-polynomial};
since \(1-\beta\neq0\), \(\mathcal R\) is diagonalizable.  The only nonzero
difference of eigenvalues of \(\mathcal R\) is \(\pm(1-\beta)\in(-1,1)\setminus\{0\}\),
so no two eigenvalues differ by a nonzero integer and Theorem~\ref{thm:gauge}
applies, giving a holomorphic frame in which \(\mathcal D=d+R_0\,dz^1/z^1\) with
\(R_0\) constant, diagonal and conjugate to \(\mathcal R\); this is
\eqref{eq:jet-log-normal-form}.
\end{proof}

\subsection{The developing map}

By Theorem~\ref{thm:jet-log-normal-form} we may fix, after shrinking \(B\), a
holomorphic frame \(s=(s_0,\dots,s_n)\) of \(E\) on \(B\) with
\begin{equation}\label{eq:normal-form-frame}
  \mathcal D s=s\,(1-\beta)E_{11}\frac{dz^1}{z^1} .
\end{equation}
On a simply connected sector of \(B\setminus D\) choose a branch of
\((z^1)^{\beta-1}\) and set
\begin{equation}\label{eq:parallel-frame}
  \widehat s=\bigl(s_0,\ (z^1)^{\beta-1}s_1,\ s_2,\dots,s_n\bigr),
\end{equation}
which by \eqref{eq:normal-form-frame} is a \(\mathcal D\)-parallel frame.

Dualising the first jet exact sequence gives a holomorphic line subbundle
\(\mathcal O_B\subset E^*\), represented by the evaluation functional
\(\mathrm{ev}(j(f))=f\).  This is a nowhere vanishing holomorphic section of
\(E^*\) over \(B\).  Let
\begin{equation}\label{eq:u-coordinates}
  u=(u_0,u_1,\dots,u_n),
  \qquad
  u_i=\mathrm{ev}(s_i),
\end{equation}
be its coordinates in the dual frame \(s^*\); they are holomorphic on
\(B\) and have no common zero.  In the frame \(e\) the corresponding coordinates
are \((1,z^1,\dots,z^n)\).

\begin{lemma}\label{lem:parallel-frame}
The matrix of \(\mathcal H\) in the frame \eqref{eq:parallel-frame} is constant, and
its entries \((1,j)\) with \(j\neq1\) vanish.  Moreover \(u_1\) vanishes along \(D\).
\end{lemma}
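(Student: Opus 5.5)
The plan is to read off all three assertions from the normal form \eqref{eq:normal-form-frame}, using that \(\mathcal D\) is the Chern connection of \(\mathcal H\), that its monodromy around \(D\) is nontrivial, and that its residue is intrinsic.

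\emph{Constancy.} First I show that the matrix of \(\mathcal H\) in \(\widehat s\) is constant. On a simply connected sector the frame \(\widehat s\) is \(\mathcal D\)-parallel, by \eqref{eq:normal-form-frame} and the choice of branch in \eqref{eq:parallel-frame}. Since \(\mathcal D\) is compatible with \(\mathcal H\), we have
\[
  d\,\mathcal H(\widehat s_i,\widehat s_j)=\mathcal H(\mathcal D\widehat s_i,\widehat s_j)+\mathcal H(\widehat s_i,\mathcal D\widehat s_j)=0 .
\]
So each entry is locally constant on the sector, hence constant there. Two branches differ only by a constant factor in \(\widehat s_1\). Thus the entries with no index \(1\), and the \((1,1)\) entry, are globally constant on \(B\setminus D\). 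The remaining entries are multiplied by a constant phase when the branch is changed, and I will show below that they vanish.

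\emph{Vanishing of the \((1,j)\) entries.} I use the monodromy argument already used in Section~\ref{sec:classification}. Continue \(\widehat s\) analytically along a positive loop around \(D\). The fields \(s_j\) with \(j\neq1\) return to themselves, while \(\widehat s_1\) returns as \(\mu\,\widehat s_1\) with \(\mu=e^{2\pi\ii(\beta-1)}\). Since \(0<\beta<1\), we have \(\mu\neq1\) and \(|\mu|=1\). The form \(\mathcal H\) is single valued on \(B\setminus D\). Continuing the constant function \(\mathcal H(\widehat s_1,\widehat s_j)\) along the loop therefore gives
\[
  \mathcal H(\widehat s_1,\widehat s_j)=\mathcal H(\mu\widehat s_1,\widehat s_j)=\mu\,\mathcal H(\widehat s_1,\widehat s_j),
\]
which forces \(\mathcal H(\widehat s_1,\widehat s_j)=0\) for \(j\neq1\). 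The \((j,1)\) entries vanish by Hermitian symmetry.

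\emph{Vanishing of \(u_1\) on \(D\).} I use that the residue is an intrinsic endomorphism of \(E|_D\): under a holomorphic change of frame it is conjugated, so its image is a well-defined subbundle of \(E|_D\).
\begin{itemize}
\item In the frame \(s\), the residue is \((1-\beta)E_{11}\), so its image is spanned by \(s_1|_D\), which is nonzero.
\item In the original frame \(e\), formula \eqref{eq:jet-residue-block} shows that every row of \(\mathcal R\) except row \(1\) vanishes. Hence its image lies in the span of \(e_1|_D=j(z^1)|_D\).
\end{itemize}
Therefore \(s_1|_D=\kappa\, j(z^1)|_D\) for some function \(\kappa\) on \(D\). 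Applying the evaluation functional gives
\[
  u_1|_D=\mathrm{ev}(s_1)|_D=\kappa\, z^1|_D=0 .
\]

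The main point needing care is the monodromy step: one must be precise that \(\mathcal H\) is single valued while \(\widehat s_1\) is not, and that the constant obtained is the same after continuation. The residue step is routine once one notes that the image of the residue is frame independent.
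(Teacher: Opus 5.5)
Your proposal is correct and follows the paper's proof. Constancy comes from parallelism plus compatibility of \(\mathcal D\) with \(\mathcal H\), and the entries \((1,j)\) vanish because \(\mathcal H\) is single-valued while \(\widehat s_1\) picks up the factor \(e^{2\pi\ii(\beta-1)}\neq1\) around \(D\). For \(u_1|_D=0\) the routes differ only cosmetically: you show that \(\operatorname{im}\mathcal R\) is spanned by \(s_1|_D\) and lies in the span of \(j(z^1)|_D\), which \(\mathrm{ev}\) kills. The paper instead shows that \(\mathrm{ev}|_D\) lies in \(\ker\mathcal R^{\mathsf T}\), which is the annihilator of \(\operatorname{im}\mathcal R\), so this is the same fact stated dually.
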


\begin{proof}
Since \(\mathcal D\widehat s=0\) and \(\mathcal D\) is the Chern connection of
\(\mathcal H\), the functions \(\mathcal H(\widehat s_i,\widehat s_j)\) have vanishing
differential on the sector, so they are constants.  For \(j\neq1\) this reads
\[
  \mathcal H(s_1,s_j)=c\,(z^1)^{1-\beta}
\]
for a constant \(c\), on the chosen branch.  The left-hand side is single-valued on
\(B\setminus D\), while continuation once around \(D\) multiplies the right-hand side
by \(e^{2\pi\ii(1-\beta)}\neq1\); hence \(c=0\).

For the last assertion, let \(\mathcal R=\Res_D(\mathcal D)\), so that the residue of
the dual connection on \(E^*\) is \(-\mathcal R^{\mathsf T}\).  By
\eqref{eq:jet-residue-block} every row of \(\mathcal R\) vanishes except row \(1\);
computing in the frame \(e\), where \(\mathrm{ev}\) has coordinates
\((1,z^1,\dots,z^n)\),
\[
  \bigl(\mathcal R^{\mathsf T}\mathrm{ev}\bigr)_j
  =\sum_i\mathcal R_{ij}\,\mathrm{ev}_i
  =z^1\,\mathcal R_{1j},
\]
which vanishes on \(D\).  Thus \(\mathrm{ev}|_D\) lies in the kernel of the residue of
the dual connection.  In the frame \(s\) that residue is
\(-(1-\beta)E_{11}\) by \eqref{eq:normal-form-frame}, and its kernel is
\(\{u_1=0\}\).
\end{proof}

Write
\begin{equation}\label{eq:u1-factor}
  u_1=z^1v
\end{equation}
with \(v\) holomorphic on \(B\).  By Lemma~\ref{lem:parallel-frame} the matrix of
\(\mathcal H\) in the frame \eqref{eq:parallel-frame}, and hence that of the induced
form \(\mathcal H^\vee\) in the coframe \(\widehat s^*\), is constant and splits the
slot \(1\) off from the remaining ones.  The restriction of \(\mathcal H^\vee\) to
the evaluation line is \(h_\sigma^{-1}>0\)
\cite[Corollary~5.2]{deBorbon2025}, so the vector
\(u(0)=(u_0(0),0,u_2(0),\dots,u_n(0))\) is positive.  Replacing \(\widehat s\) by
\(\widehat s\,A\) for a constant \(A\) respecting the splitting, we may therefore assume
that the matrix of \(\mathcal H^\vee\) in the coframe \(\widehat s^*\) is the identity
when \(\sigma=1\) and \(\operatorname{diag}(1,-1,\dots,-1)\) when \(\sigma=-1\), and,
using that the isometry group of the second block acts transitively on the positive
vectors of a given norm, that in addition
\begin{equation}\label{eq:normalisation}
  u_a(0)=0\quad(a\ge2),
  \qquad\text{so that}\qquad
  u_0(0)\neq0 .
\end{equation}

The coframe \(\widehat s^*\) now identifies \(\mathbb P(E^*)\), respectively its
domain of positive lines, with \(X_\sigma\) fibrewise isometrically, and by
\eqref{eq:parallel-frame}, \eqref{eq:u-coordinates} and \eqref{eq:u1-factor} the
section defined by the evaluation line becomes the map
\begin{equation}\label{eq:developing-map}
  F=\bigl[\,u_0:(z^1)^\beta v:u_2:\dots:u_n\,\bigr] .
\end{equation}
By \cite[Proposition~5.3]{deBorbon2025},
\begin{equation}\label{eq:jet-pullback-identity}
  F^*\omega_\sigma=\omega .
\end{equation}
Substituting \eqref{eq:developing-map} into \eqref{eq:space-form}, and discarding the
pluriharmonic term \(\log|u_0|^2\), this reads
\begin{equation}\label{eq:explicit-potential}
  \omega
  =\sigma\,\ii\,\partial\bar\partial
   \log\Bigl(1+\sigma\Bigl(|z^1|^{2\beta}|\chi|^2
     +\sum_{a=2}^n\bigl|u_a/u_0\bigr|^2\Bigr)\Bigr),
  \qquad
  \chi=v/u_0,
\end{equation}
on a neighbourhood of the origin.

\begin{lemma}\label{lem:developing-coordinates}
We have \(\chi(0)\neq0\), and for any branch of \(\chi^{1/\beta}\) the functions
\begin{equation}\label{eq:new-coordinates}
  w^1=z^1\chi^{1/\beta},
  \qquad
  w^a=u_a/u_0\quad(a\ge2)
\end{equation}
are holomorphic coordinates centred at the origin, with \(D=\{w^1=0\}\).
\end{lemma}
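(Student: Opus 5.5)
The plan is to read off both assertions from the pullback identity \eqref{eq:explicit-potential} near the origin, using the comparison \eqref{eq:edge-comparison}. The key intermediate object is the holomorphic map \(\Phi=(u_2/u_0,\dots,u_n/u_0)\). Note that \(u_0(0)\neq0\) by \eqref{eq:normalisation}, so \(\Phi\) and \(\chi=v/u_0\) are holomorphic near \(0\), and \(\Phi(0)=0\).

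First I would compute \(\omega\) at points of \(B\setminus D\) close to the origin, to leading order. Set \(\Psi=|z^1|^{2\beta}|\chi|^2+|\Phi|^2\), which tends to \(0\) at the origin. Then
\[
\omega=\sigma\,\ii\,\partial\bar\partial\log(1+\sigma\Psi)=\frac{\ii\,\partial\bar\partial\Psi}{1+\sigma\Psi}-\sigma\frac{\ii\,\partial\Psi\wedge\bar\partial\Psi}{(1+\sigma\Psi)^2}.
\]
Near the origin this is uniformly comparable to \(\ii\,\partial\bar\partial\Psi\); in fact it is bounded above and below by constant multiples of \(\ii\,\partial\bar\partial\Psi\) in a small enough ball. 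Equivalently, \(\omega\) is comparable to the pullback of the flat metric by \(G=(z^1\chi^{1/\beta}\text{ branch},\Phi)\). Here
\[
\ii\,\partial\bar\partial\Psi=\ii\,\partial\bigl((z^1)^\beta\chi\bigr)\wedge\overline{\partial\bigl((z^1)^\beta\chi\bigr)}+\sum_a\ii\,\partial\Phi^a\wedge\overline{\partial\Phi^a},
\]
which is a sum of squares of holomorphic \((1,0)\)-forms. The second fundamental form term only helps with the upper bound and is negligible at small \(\Psi\).

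\textbf{Step 1: \(\chi(0)\neq0\).} Evaluate \(\omega\) on \(\partial_{z^1}\) at points \((z^1,0)\) with \(z^1\to0\). Write \(\partial_{z^1}\bigl((z^1)^\beta\chi\bigr)=(z^1)^{\beta-1}\bigl(\beta\chi+z^1\partial_1\chi\bigr)\). If \(\chi(0)=0\), then \(\chi=O(|z|)\), and along the normal slice the first contribution is \(O\bigl(|z^1|^{2\beta-2}(|z^1|^2+|z'|^2)\bigr)\) at \(z'=0\), that is, \(o(|z^1|^{2\beta-2})\). The \(\Phi\) contribution is bounded. So \(g(\partial_1,\partial_1)=o(|z^1|^{2\beta-2})\), which contradicts the lower bound \(g\ge C^{-1}g_\beta\). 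Hence \(\chi(0)\neq0\), and after shrinking \(B\) a branch of \(\chi^{1/\beta}\) is holomorphic. Then \(w^1=z^1\chi^{1/\beta}\) is holomorphic, \(\{w^1=0\}=D\), and \(\partial w^1/\partial z^1(0)=\chi(0)^{1/\beta}\neq0\), while \(\partial w^1/\partial z^a(0)=0\).

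\textbf{Step 2: \(w\) is a coordinate system.} By Step 1, the Jacobian of \((w^1,w^2,\dots,w^n)\) at \(0\) is block triangular. It therefore suffices to show that the \((n-1)\times(n-1)\) matrix \(\partial_{z^b}\Phi^a(0)\) is invertible. Suppose instead that a nonzero tangent vector \(\xi=\sum_b\xi^b\partial_{z^b}\) at \(0\) satisfies \(d\Phi(\xi)=0\). Evaluate \(g\) on the constant field \(\xi\) at points \((z^1,0)\) with \(z^1\to0\). The first term gives \(|z^1|^{2\beta}|\xi\chi|^2\to0\), using \((w^1)^\beta=(z^1)^\beta\chi\). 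The \(\Phi\) term gives \(|d\Phi_{(z^1,0)}(\xi)|^2\to|d\Phi_0(\xi)|^2=0\) by continuity. So \(g(\xi,\xi)\to0\), whereas \(g_\beta(\xi,\xi)=|\xi|^2>0\). This contradicts \eqref{eq:edge-comparison}. Therefore \(d\Phi_0\) restricted to \(TD\) is invertible, and by the inverse function theorem \(w\) is a holomorphic chart centred at \(0\) with \(D=\{w^1=0\}\).

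The main obstacle is making the comparison between \(\omega\) and the sum-of-squares form rigorous uniformly as \(z^1\to0\). The denominators \(1+\sigma\Psi\) are harmless because \(\Psi\to0\) at the origin. For the lower-bound contradictions above, I would use only the upper estimate \(\omega\le C\,\ii\,\partial\bar\partial\Psi\) near \(0\). This holds because the subtracted term \(-\sigma\,\ii\,\partial\Psi\wedge\bar\partial\Psi\) is either nonpositive (\(\sigma=1\)) or bounded by \(|\Psi|\) times \(\ii\,\partial\bar\partial\Psi\) by Cauchy--Schwarz (\(\sigma=-1\)).
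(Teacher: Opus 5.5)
Your proof is correct. It rests on the same three inputs as the paper's proof:
\begin{itemize}
\item the identity \(\omega=P^*\omega_\sigma\), with \(P=\bigl((z^1)^\beta\chi,\Phi\bigr)\);
\item the comparability of \(\omega_\sigma\) with the Euclidean form near \(P(0)=0\);
\item the lower cone bound \(g\ge C^{-1}g_\beta\).
\end{itemize}

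The two arguments combine these inputs differently. The paper passes to determinants and obtains \(C^{-1}\rho^{\beta-1}\le|\det DP|\le C\rho^{\beta-1}\). A single expansion along the first row, \(\det DP=\beta(z^1)^{\beta-1}\chi\det\bigl(\partial_b(u_a/u_0)\bigr)+O(\rho^\beta)\), then gives both \(\chi(0)\neq0\) and the invertibility of \(\bigl(\partial_b(u_a/u_0)\bigr)(0)\) at once. You instead test the lower bound on individual vectors. Testing on \(\partial_{z^1}\) along the normal slice rules out \(\chi(0)=0\). Testing on a putative vector \(\xi\in T_0D\) with \(d\Phi_0(\xi)=0\) rules out degeneracy of the tangential block.

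The paper's version is more compact. Yours needs only the one-sided estimate \(\omega\le C\,\ii\,\partial\bar\partial\Psi\), which you justify properly: by the sign of the gradient term when \(\sigma=1\), and by Cauchy--Schwarz applied to \(\Psi=\sum_\alpha|f_\alpha|^2\) when \(\sigma=-1\). It also shows which degeneration each half of the conclusion excludes.

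One aside should be corrected, although you never use it. \(\omega\) is comparable to the pullback of the flat metric by \(\bigl((z^1)^\beta\chi,\Phi\bigr)\), not by \(\bigl(z^1\chi^{1/\beta},\Phi\bigr)\). The latter map, once shown to be a local biholomorphism, would pull back the flat metric to a metric that is smooth across \(D\). Its definition also presupposes \(\chi(0)\neq0\), which is what you are proving.
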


\begin{proof}
On the sector let \(P=\bigl((z^1)^\beta\chi,\,u_2/u_0,\dots,u_n/u_0\bigr)\) be the
map into the affine chart of \eqref{eq:space-form}, so that
\(P^*\omega_\sigma=\omega\) by \eqref{eq:explicit-potential}.  By
\eqref{eq:normalisation}, after shrinking \(B\) the image of \(P\) lies in a fixed
compact subset of the chart, on which \(g_\sigma\) is uniformly equivalent to the
Euclidean metric; hence \(\det(P^*g_\sigma)\) and \(|\det DP|^2\) are comparable.
Since \(g\) is uniformly equivalent to \(g_\beta\), whose determinant is
\(\rho^{2\beta-2}\), this gives
\[
  C^{-1}\rho^{\beta-1}\le |\det DP|\le C\rho^{\beta-1}
\]
for some constant \(C\ge1\).
On the other hand \(\partial_1P^1=\beta(z^1)^{\beta-1}\chi+O(\rho^\beta)\) while all
the other entries of \(DP\) are \(O(1)\), those in the first row being
\(O(\rho^{\beta})\).  Expanding the determinant along the first row,
\[
  \det DP=\beta(z^1)^{\beta-1}\chi\cdot
   \det\bigl(\partial_b(u_a/u_0)\bigr)_{a,b\ge2}+O(\rho^\beta),
\]
so the displayed lower bound forces \(\chi(0)\neq0\) and
\(\det\bigl(\partial_b(u_a/u_0)\bigr)_{a,b\ge2}\neq0\) at the origin.  The functions
\eqref{eq:new-coordinates} then vanish at the origin by \eqref{eq:normalisation}, and
their Jacobian there is block triangular, since \(\partial_aw^1\) vanishes on
\(D\), with determinant
\(\chi(0)^{1/\beta}\det\bigl(\partial_b(u_a/u_0)\bigr)_{a,b\ge2}\neq0\).  Finally
\(w^1\) and \(z^1\) differ by the nonvanishing factor \(\chi^{1/\beta}\), so
\(D=\{w^1=0\}\).
\end{proof}

\subsection{Proof of the classification in nonzero curvature}

\begin{proof}[Proof of Theorem~\ref{thm:main}(ii)]
Let \(w\) be the coordinates of Lemma~\ref{lem:developing-coordinates}.  Then
\(|w^1|^{2\beta}=|z^1|^{2\beta}|\chi|^2\), so \eqref{eq:explicit-potential} becomes
\[
  \omega=\sigma\,\ii\,\partial\bar\partial
   \log\Bigl(1+\sigma\bigl(|w^1|^{2\beta}
     +\textstyle\sum_{a=2}^n|w^a|^2\bigr)\Bigr),
\]
which proves the result in the normalisation \(c=2\sigma\).  Multiplying this
formula by \(2/|c|\) gives \eqref{eq:main-chsc} for the original metric.
Equivalently, by
\eqref{eq:developing-map} and \eqref{eq:new-coordinates} the developing map is
\(F(w)=\bigl((w^1)^\beta,w'\bigr)\) in the affine chart, and
\eqref{eq:jet-pullback-identity} is the assertion of the theorem.
\end{proof}

\begin{remark}
The nonzero-curvature argument uses two structures on \(E\).  The flat Chern
connection \(\mathcal D\) determines the residue \(1-\beta\), while the evaluation
line \(\mathcal O_B\subset E^*\), obtained by dualising the first jet exact sequence,
determines the developing map.  The passage from
\eqref{eq:developing-map} to \eqref{eq:new-coordinates} is the higher-dimensional
form of the normalisation used in \cite{deBorbonPanov2022} to identify the local
model of a spherical metric at a cone point.
\end{remark}

\clearpage
\appendix

\section{Polyhomogeneity of K\"ahler--Einstein cone metrics}
\label{app:polyhomogeneity}

In this appendix we show that the regularity hypothesis in
Theorem~\ref{thm:main} follows from its other assumptions.  The argument is local
along \(D\), so throughout the proof we may work in a smaller ball compactly
contained in \(B\).

\begin{theorem}[Local polyhomogeneity]\label{thm:appendix-polyhomogeneity}
Let \(g\) be a K\"ahler metric on \(B\setminus D\), where \(D=\{z^1=0\}\), and
suppose that, for some \(0<\beta<1\) and \(C\ge1\),
\[
  C^{-1}g_\beta\le g\le Cg_\beta.
\]
Assume moreover that
\begin{equation}\label{eq:app-KE}
  \Ric(g)=\kappa g
\end{equation}
on \(B\setminus D\), for some \(\kappa\in\R\).  Then \(g\) is
polyhomogeneous along \(D\) in the sense of
Definition~\ref{def:polyhomogeneous}.
\end{theorem}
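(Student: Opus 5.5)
We reduce Theorem~\ref{thm:appendix-polyhomogeneity} to the local polyhomogeneity theory for complex Monge--Amp\`ere equations with edge singularities in \cite{JMR2016}. There are four steps: produce a bounded K\"ahler potential \(\varphi\) with \(\omega=\ii\partial\bar\partial\varphi\); write the Einstein condition as a Monge--Amp\`ere equation relative to a reference cone metric; establish edge H\"older regularity of the solution; then run the edge-calculus bootstrap to obtain a full expansion and read off conditions (i) and (ii) of Definition~\ref{def:polyhomogeneous}, as in Remark~\ref{rem:jmr}.

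\emph{Potential and equation.} Since \(\omega\le C\omega_\beta\) and \(\omega_\beta\) has locally integrable coefficients, \(\omega\) extends as a closed positive current on \(B\) putting no mass on \(D\). The \(\partial\bar\partial\)-lemma on a ball then gives a plurisubharmonic \(\varphi\) with \(\omega=\ii\partial\bar\partial\varphi\). The bound \(\omega\le C\omega_\beta=C\ii\partial\bar\partial(\beta^{-2}|z^1|^{2\beta}+|z'|^2)\) lets us choose \(\varphi\) bounded: compare with that continuous potential and use that bounded-Laplacian differences are continuous. Next put \(F=\log(\omega^n/\omega_\beta^n)\). This function is bounded by \eqref{eq:edge-comparison}. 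The equation \(\Ric(g)=\kappa g\), together with \(\Ric(\omega_\beta)=0\) off \(D\), gives \(\ii\partial\bar\partial(F+\kappa\varphi)=0\) on \(B\setminus D\). A bounded pluriharmonic function on \(B\setminus D\) extends across \(D\) by a removable singularity argument, so \(F=-\kappa\varphi+h\) with \(h\) pluriharmonic and smooth on \(B\). Writing \(\varphi=\varphi_\beta+\psi\), where \(\varphi_\beta=\beta^{-2}|z^1|^{2\beta}+|z'|^2\) (or a reference potential with \(z'\)-dependence, as in \cite{JMR2016}), we get
\[
  (\omega_\beta+\ii\partial\bar\partial\psi)^n=e^{h-\kappa\varphi_\beta-\kappa\psi}\,\omega_\beta^n ,
\]
with \(\psi\) bounded and \(\omega_\beta+\ii\partial\bar\partial\psi\) uniformly equivalent to \(\omega_\beta\).

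\emph{Regularity, the expected main obstacle.} The analysis of \cite{JMR2016} needs \(\psi\in\mathcal C^{2,\alpha}_{\mathrm{edge}}\), that is, H\"older control of the metric in the cone coordinates \((\zeta,z')\). Uniform equivalence alone gives only \(L^\infty\) control of the Laplacian. I would try a local Evans--Krylov-type estimate adapted to cone metrics. One route is to pass to the local orbifold or branched cover when \(\beta=1/m\). In general I would use the Calabi \(C^3\) estimate, or Chern--Lu combined with a Schauder estimate on the edge space, following Chen--Donaldson--Sun or Guenancia--P\u{a}un, localised with cutoffs. The concern is that the local Evans--Krylov estimate must be proved for the edge structure without a global barrier. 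Once \(\mathcal C^{2,\alpha}_{\mathrm{edge}}\) holds, differentiating the equation tangentially along \(z'\) and in \(\theta\) gives edge-smoothness, i.e.\ \(\psi\in\mathcal A^0\).

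\emph{Expansion and conclusion.} Linearising gives \(\Delta_{\omega}\dot\psi\)-type equations whose leading part is the model edge Laplacian. Its indicial roots are \(\{j+k/\beta\}\), together with the normal-operator parametrix of \cite{JMR2016}. Iterating (\cite[Theorem~2]{JMR2016}, applied locally with cutoffs) yields the polyhomogeneous expansion of \(\psi\), hence of \(\varphi\), with exponents \(j+k/\beta\). Condition (i) then follows exactly as in Remark~\ref{rem:jmr}: the order-\(r\) term is excluded because \(a''=-\beta^2a\) has no nonzero periodic solution, and the \(r^{1/\beta}\) term is pluriharmonic in \(\zeta\). Condition (ii) follows from the \(\theta\)-independence of the \(r^2\) coefficient.
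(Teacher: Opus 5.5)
Your outline follows the same architecture as the paper's appendix: a bounded plurisubharmonic potential, extension of the bounded pluriharmonic function $\log(\omega^n/\omega_\beta^n)+\kappa\varphi$ across $D$, conical H\"older regularity, and then an expansion theorem. The gap is the step you yourself flag as the main obstacle, and it is the analytic heart of the proof.

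You must pass from the $L^\infty$ information $C^{-1}\omega_\beta\le\ii\partial\bar\partial\varphi\le C\omega_\beta$ to $\varphi\in C^{2,\alpha,\beta}$ for a \emph{given} local weak solution. None of the routes you list does this. The orbifold cover handles only $\beta=1/m$. The Calabi $C^3$ estimate and the Chern--Lu plus edge Schauder arguments (Chen--Donaldson--Sun, Guenancia--P\u{a}un) are a priori estimates. They apply to solutions already known to lie in the edge H\"older space, or produced by a global continuity or approximation scheme. A maximum principle for third-order quantities near $D$ presupposes that those quantities are controlled there. A Schauder estimate needs H\"older coefficients, whereas here the coefficients $g^{i\bar j}$ are only bounded. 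Cutoffs do not remove this circularity. The paper closes the gap with a ready-made local theorem, \cite[Theorem~1.6]{ChenWang2017}. In the form used there, it says that a weak conical K\"ahler metric whose plurisubharmonic potential and positive density $F$ in $\omega^n=F\omega_\beta^n$ both lie in $C^{1,1,\beta}$ is in fact $C^{2,\alpha,\beta}$ for $0<\alpha<\min\{1/\beta-1,1\}$.

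Applying such a theorem also needs estimates your proposal does not provide. For $\kappa\neq0$ the density $e^{h-\kappa\varphi}$ (with $h$ your pluriharmonic function) involves the unknown, so boundedness of $\varphi$ is not enough. The paper proceeds in three steps.
\begin{enumerate}
\item It proves $\varphi\in C^\gamma$ for every $\gamma<\min\{2\beta,1\}$, using the mass growth $\int_{B_s(x)}\Delta_{\mathrm{Euc}}\varphi\le Cs^{2n-2+2\beta}$ and a Newton-potential estimate (Proposition~\ref{prop:app-weak-potential}).
\item Taking $\gamma>\beta$, it applies interior gradient estimates on tangential slices and on normal discs of radius comparable to $r$, where the oscillation is $O(r^{\gamma/\beta})$. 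This gives $|\partial\varphi|_{g_\beta}\le C$ (Lemma~\ref{lem:app-gradient}).
\item With $\Theta=h-\kappa\varphi$ and $\ii\partial\bar\partial\Theta=-\kappa\omega$, the identity $\ii\partial\bar\partial e^{\Theta}=e^{\Theta}\bigl(\ii\partial\bar\partial\Theta+\ii\partial\Theta\wedge\bar\partial\Theta\bigr)$ shows that the density lies in $C^{1,1,\beta}$.
\end{enumerate}

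Two smaller points. First, your justification of boundedness is off: $\varphi-C\varphi_\beta$ need not have bounded Laplacian, since its Laplacian is only $O(\rho^{2\beta-2})$. The correct reason is that $\varphi$ and $C\varphi_\beta-\varphi$ are both plurisubharmonic. This even gives continuity, though not the H\"older bound needed above. Second, \cite[Theorem~2]{JMR2016} is a global theorem on compact manifolds, and localising its parametrix ``with cutoffs'' would itself need proof. The paper instead uses the genuinely local expansion \cite[Theorem~1.1]{YinZheng2019}. Its exponents are $2j+k/\beta$, subject to $(k-l)/2\in\N\cup\{0\}$ and $m\le\max\{0,k-1\}$. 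This makes $\sigma_1>1$ and \eqref{eq:G-limit} a direct inspection of the terms of order at most $2$.
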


\subsection{The weak conical potential}

We first obtain from the metric comparison the weak potential required by
Chen--Wang.

Following \cite[Definition~1.3, pp.~970--971]{ChenWang2017}, a function \(u\) on
an open set \(U\) meeting \(D\) belongs to \(C^{1,1,\beta}(U)\) if, for some
\(0<\alpha<1\),
\[
  u\in C^{2,\alpha}(U\setminus D)\cap C^\alpha(U)
\]
and there is a constant \(K>0\) such that
\[
  -K\omega_\beta\le \ii\,\partial\bar\partial u\le K\omega_\beta
  \qquad\text{on }U\setminus D.
\]
A closed positive \((1,1)\)-current \(\eta\) on \(U\) is a \emph{weak conical
K\"ahler metric} if it admits locally a plurisubharmonic
\(C^{1,1,\beta}\)-potential and, for some \(K>1\),
\[
  K^{-1}\omega_\beta\le \eta\le K\omega_\beta
  \qquad\text{on }U\setminus D.
\]

\begin{proposition}[Weak potential]\label{prop:app-weak-potential}
Let \(g\) be a K\"ahler metric on \(B\setminus D\) satisfying
\(C^{-1}g_\beta\le g\le Cg_\beta\), and let \(\omega\) be its K\"ahler form.
Then \(\omega\) has locally integrable coefficients near \(D\), and its trivial
extension across \(D\) is a closed positive current.  Locally along \(D\), this
current admits a plurisubharmonic potential \(\varphi\) such that
\[
  \omega=\ii\,\partial\bar\partial\varphi,
  \qquad
  \varphi\in C^\gamma
  \quad\text{for every}\quad
  0<\gamma<\min\{2\beta,1\}.
\]
Moreover,
\begin{equation}\label{eq:app-hessian-comparison}
  C^{-1}\omega_\beta
  \le \ii\,\partial\bar\partial\varphi
  \le C\omega_\beta
\end{equation}
on \(B\setminus D\).  In particular, \(\omega\) is a weak conical K\"ahler
metric in the sense of \cite[Definition~1.3]{ChenWang2017}.
\end{proposition}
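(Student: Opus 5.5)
We prove Proposition~\ref{prop:app-weak-potential} in four steps: local integrability and closedness of the extended current, existence of a psh potential, Hölder regularity of that potential, and the Hessian comparison.

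\emph{Integrability.} The comparison $g\le Cg_\beta$ bounds each coefficient of $\omega$ in the $z$-coordinates. We get $|\omega_{1\bar1}|\le C\rho^{2\beta-2}$, $|\omega_{a\bar b}|\le C$, and, by Cauchy--Schwarz for the positive form $\omega$, $|\omega_{1\bar a}|\le C\rho^{\beta-1}$. Since $2\beta-2>-2$, all of these are locally integrable near $D$. Hence the trivial extension $T$ of $\omega$ is a positive $(1,1)$-current of order zero.

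\emph{Closedness.} We test $dT$ against a compactly supported form $\psi$ using a cutoff $\chi_\epsilon(\rho)$ that vanishes for $\rho<\epsilon$, equals $1$ for $\rho>2\epsilon$, and has $|d\chi_\epsilon|\le C/\epsilon$. Since $d\omega=0$ off $D$,
\[
\langle dT,\psi\rangle=\lim_{\epsilon\to0}\pm\int\omega\wedge d\chi_\epsilon\wedge\psi .
\]
The integrand is supported in the shell $\epsilon<\rho<2\epsilon$. Because $d\chi_\epsilon$ involves only $d\rho$, it pairs with the coefficients $\omega_{1\bar a}$ and $\omega_{a\bar b}$, and possibly with $\omega_{1\bar1}$. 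The worst contribution is
\[
\epsilon^{-1}\cdot\epsilon^{2\beta-2}\cdot\operatorname{vol}(\text{shell})\sim\epsilon^{2\beta-1},
\]
which does not tend to zero when $\beta\le\tfrac12$. So this bound must be sharpened. The point is that $d\chi_\epsilon\wedge\omega_{1\bar1}\,dz^1\wedge d\bar z^1=0$, because $d\rho$ lies in the span of $dz^1,d\bar z^1$. The surviving terms involve $\omega_{1\bar a}$ or $\omega_{a\bar b}$, giving $\epsilon^{-1}\epsilon^{\beta-1}\epsilon^2=\epsilon^{\beta}\to0$. Thus $T$ is closed.

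\emph{Potential.} On a small polydisc, $T$ is a closed positive current, so the local $\partial\bar\partial$-lemma for currents gives a psh $\varphi$ with $T=\ii\partial\bar\partial\varphi$. Off $D$ it is smooth by ellipticity, since $\ii\partial\bar\partial\varphi=\omega$ there. To control $\varphi$ we make a concrete choice. We solve $dd^c\varphi=T$ via the Poincaré lemma with estimates, for example by convolving the trace measure with the Newton kernel: $\Delta\varphi=\tr_{\mathrm{Euc}}T$ plus a pluriharmonic correction. More cleanly, we compare with the model potential
\[
\varphi_\beta=\beta^{-2}|z^1|^{2\beta}+|z'|^2,\qquad \ii\partial\bar\partial\varphi_\beta=\omega_\beta .
\]

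\emph{Hölder regularity (main obstacle).} The trace $\Delta\varphi$ is $O(\rho^{2\beta-2})$, which lies in $L^p$ for every $p<1/(1-\beta)$ in the normal directions, uniformly in $z'$. I would first estimate the Newtonian potential of the density $f=\Delta\varphi$ directly. Splitting $f$ into its normal singular part and a bounded part suggests that $\varphi$ behaves like a superposition of one-variable potentials of $|z^1|^{2\beta-2}$, which are $C^{\min\{2\beta,1\}-}$. Concretely, I would use the bound $|f|\le C\rho^{2\beta-2}$ together with the kernel estimate
\[
\int |G(x,y)-G(x',y)|\,\rho(y)^{2\beta-2}\,dy\lesssim|x-x'|^{\gamma}
\]
for the Newtonian kernel $G$ in $\R^{2n}$. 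The integrand is treated on the near and far regions relative to $|x-x'|$, using the weight $\rho^{2\beta-2}$, which is integrable on $2$-dimensional slices. A pluriharmonic correction is smooth, so $\varphi\in C^\gamma$ for every $\gamma<\min\{2\beta,1\}$. This is where the care is needed: the constraint $\gamma<1$ arises because the weight integrates to a potential that is Lipschitz only up to logarithmic losses.

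\emph{Hessian comparison.} Finally, \eqref{eq:app-hessian-comparison} is just $C^{-1}g_\beta\le g\le Cg_\beta$ restated, because $\ii\partial\bar\partial\varphi=\omega$ on $B\setminus D$. Together with $C^{2,\alpha}$ smoothness off $D$ and the Hölder continuity across $D$, this puts $\varphi$ in $C^{1,1,\beta}$, so $\omega$ is a weak conical Kähler metric.
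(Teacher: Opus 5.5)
Your proposal is correct. Its potential and H\"older steps are the same as the paper's. The paper also takes an arbitrary psh potential from the local $\partial\bar\partial$-lemma, writes $\Delta_{\mathrm{Euc}}\varphi=f$ with $0\le f\le C(1+\rho^{2\beta-2})$, and estimates the Newton potential $\Phi$ of a cut-off of $f$ by splitting into near and far regions. It makes your phrase ``integrable on $2$-dimensional slices'' precise as the mass bound $\int_{B_s(x)}f\le Cs^{2n-2+2\beta}$, uniform in the centre $x$. The near region then contributes $Cd^\delta$ and the dyadic annuli contribute $Cd^\delta\sum_k 2^{-k(1-\delta)}$, so the restriction $\gamma<1$ enters exactly as you say. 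Two small points: $\varphi-\Phi$ is only harmonic, not pluriharmonic, but that is all Weyl's lemma needs; and your comparison with $\varphi_\beta$ is never used, so it can be dropped.

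The genuine difference is the closedness step. The paper quotes Skoda--El Mir, which needs only locally finite mass near the complete pluripolar set $D$. You instead compute the cutoff term directly, using $d\rho\wedge dz^1\wedge d\bar z^1=0$ to eliminate the $g_{1\bar1}$ coefficient. This brings the shell integral down to $O(\epsilon^{\beta})$ instead of $O(\epsilon^{2\beta-1})$. Your route is elementary and self-contained, but it relies on the explicit pointwise bounds $|g_{1\bar a}|\le C\rho^{\beta-1}$ and $|g_{a\bar b}|\le C$ relative to a smooth divisor. The paper's route is shorter and would still work with mass bounds alone, or with a more singular polar set.
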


\begin{proof}
Put \(\rho=|z^1|\).  The metric comparison and positivity give
\[
  g_{1\bar1}=O(\rho^{2\beta-2}),
  \qquad
  g_{a\bar b}=O(1),
  \qquad
  |g_{1\bar a}|^2\le g_{1\bar1}g_{a\bar a}
    =O(\rho^{2\beta-2}).
\]
Consequently
\[
  \tr_{\omega_{\mathrm{Euc}}}\omega
  \le C(1+\rho^{2\beta-2}).
\]
Since
\[
  \int_0^s\rho^{2\beta-2}\rho\,d\rho=O(s^{2\beta}),
\]
the form \(\omega\) has locally finite mass near \(D\).  Its trivial extension
is positive, and, since \(D\) is complete pluripolar, the Skoda--El Mir
extension theorem implies that it is closed; see
\cite[Chapter~III, Section~2.A, Theorem~2.3]{Demailly2012}.  The local
\(\partial\bar\partial\)-lemma for currents then gives a plurisubharmonic
\(\varphi\in L^1_{\mathrm{loc}}\) with
\(\omega=\ii\,\partial\bar\partial\varphi\).  The estimate
\eqref{eq:app-hessian-comparison} is the original metric comparison.

It remains to establish the H\"older regularity.  Taking a Euclidean trace gives,
in the sense of distributions,
\[
  \Delta_{\mathrm{Euc}}\varphi=f,
  \qquad
  0\le f\le C(1+\rho^{2\beta-2}).
\]
In real dimension \(2n\), integration first in the two normal variables gives
\begin{equation}\label{eq:app-mass-growth}
  \int_{B_s(x)}f\le Cs^{2n-2+2\beta}
\end{equation}
for every sufficiently small Euclidean ball.  If the ball stays away from \(D\),
the same estimate follows directly from the pointwise bound for \(f\).

Fix \(0<\gamma<\min\{2\beta,1\}\) and choose
\(\gamma<\delta<\min\{2\beta,1\}\).  By
\eqref{eq:app-mass-growth},
\(\int_{B_s(x)}f\le Cs^{2n-2+\delta}\).  Let \(\Phi\) be the Newton potential
of a compactly supported cut-off of \(f\), equal to \(f\) on a smaller ball,
and put \(d=|x-y|\).  First suppose that \(n\ge2\).  The region within distance
\(2d\) of \(x\) or \(y\) contributes at most \(Cd^\delta\).  Decompose the
remaining region into the annuli
\(A_k=\{2^kd\le |x-z|<2^{k+1}d\}\), \(k\ge1\).  On \(A_k\), the difference of
the Newton kernels is at most \(Cd(2^kd)^{1-2n}\), while the mass is at most
\(C(2^kd)^{2n-2+\delta}\).  Their sum is therefore bounded by
\(Cd^\delta\sum_{k\ge1}2^{-k(1-\delta)}\).  When \(n=1\), the Newton kernel is
logarithmic.  The annuli give the same estimate, while the region near \(x\)
and \(y\) contributes at most \(Cd^\delta(1+|\log d|)\).  Since
\(\gamma<\delta\), in both cases
\(|\Phi(x)-\Phi(y)|\le C|x-y|^\gamma\).
On the smaller ball, \(\varphi-\Phi\) is distributionally harmonic, so Weyl's
lemma shows that it is smooth.  Thus \(\varphi\in C^\gamma\).  Away from \(D\),
the potential is smooth because its complex Hessian is \(\omega\), and the
proposition follows.
\end{proof}

We shall also need a first-derivative bound in order to verify the precise
hypothesis on the right-hand side of the Chen--Wang equation.  On a simply
connected sector choose the cone coordinate
\[
  \zeta=\frac{(z^1)^\beta}{\beta},
  \qquad r=|\zeta|.
\]

\begin{lemma}[Gradient bound]\label{lem:app-gradient}
Choose
\[
  \beta<\gamma<\min\{2\beta,1\}.
\]
Then
\[
  |\partial_{z'}\varphi|\le C,
  \qquad
  |\partial_\zeta\varphi|\le Cr^{\gamma/\beta-1}.
\]
In particular,
\begin{equation}\label{eq:app-gradient-bounded}
  |\partial\varphi|_{g_\beta}\le C.
\end{equation}
\end{lemma}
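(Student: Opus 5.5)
We prove the two bounds separately, each by the standard interior gradient estimate for the Poisson equation on a one-variable or \((n-1)\)-variable slice. Each slice is chosen to avoid \(D\) and to be a region on which the relevant Laplacian of \(\varphi\) is bounded. The inputs are three facts from Proposition~\ref{prop:app-weak-potential}. First, \(\varphi\) is bounded and \(C^\gamma\) for the fixed \(\gamma\in(\beta,\min\{2\beta,1\})\). Second, \(\varphi\) is smooth on \(B\setminus D\). Third, the Hessian comparison \eqref{eq:app-hessian-comparison} holds. The estimate we use is the following. If \(\Delta u=f\) on a Euclidean ball \(B_s(x)\) in \(\R^m\), then
\[
  |\nabla u(x)|\le C_m\Bigl(\frac{\operatorname{osc}_{B_s(x)}u}{s}+s\sup_{B_s(x)}|f|\Bigr).
\]

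\emph{Tangential derivatives.} Fix \(z^1\neq0\) and consider the slice \(z'\mapsto\varphi(z^1,z')\), which is smooth. By \eqref{eq:app-hessian-comparison}, the tangential block of \(\ii\partial\bar\partial\varphi\) is bounded by \(C\) times the identity, since \(\omega_\beta\) restricted to \(\{z^1\}\times\C^{n-1}\) is Euclidean. Hence the Laplacian of the slice is bounded by a constant independent of \(z^1\). After shrinking \(B\) (to a smaller ball compactly contained in the original one), every point has a tangential ball of fixed radius \(s_0\) inside the original domain, and \(|\varphi|\le C\) there. The interior estimate then gives \(|\partial_{z'}\varphi|\le C(2C/s_0+s_0C)\), uniformly in \(z^1\).

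\emph{Normal derivative.} Fix \(z'\) and a point with \(z^1\neq0\), \(\rho=|z^1|\), \(r=\rho^\beta/\beta\). On a sector containing the point we work in the variable \(\zeta\). Since \(\omega_\beta\) is Euclidean in \((\zeta,z')\), \eqref{eq:app-hessian-comparison} gives \(0\le\partial_\zeta\partial_{\bar\zeta}\varphi\le C\). Take the \(\zeta\)-disc of radius \(r/2\) about the point; it stays in the sector and away from \(D\). Its image in the \(z^1\)-plane has diameter at most \(C\rho\), because \(z^1=(\beta\zeta)^{1/\beta}\) has derivative of size \(\rho^{1-\beta}\sim\rho/r\) there. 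The \(C^\gamma\) bound therefore gives \(\operatorname{osc}\varphi\le C\rho^\gamma=Cr^{\gamma/\beta}\) on that disc. The interior estimate yields
\[
  |\partial_\zeta\varphi|\le C\bigl(r^{\gamma/\beta-1}+r\bigr)\le Cr^{\gamma/\beta-1},
\]
using \(\gamma/\beta-1<1\) (equivalent to \(\gamma<2\beta\)) and \(r\) bounded.

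\emph{Conclusion.} Because \(g_\beta\) is Euclidean in cone coordinates, \(|\partial\varphi|_{g_\beta}\le C(|\partial_\zeta\varphi|+|\partial_{z'}\varphi|)\le C(r^{\gamma/\beta-1}+1)\). This is bounded since \(\gamma>\beta\), which gives \eqref{eq:app-gradient-bounded}.

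The main obstacle is the tangential bound. A naive isotropic scaling argument near \(D\) loses powers of \(\rho\); for example, in cone coordinates the oscillation in \(z'\) over a ball of radius \(r\) is only \(O(r^\gamma)\). The slice argument on unit-size tangential balls avoids this, and its only requirement is that the tangential Hessian bound be uniform up to \(D\). That uniformity is exactly what \eqref{eq:app-hessian-comparison} provides.
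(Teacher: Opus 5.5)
Your proof is correct and follows the paper's argument. In both, the Hessian comparison bounds the Laplacian on tangential slices, where \(\varphi\) is bounded on balls of fixed radius. On a normal \(\zeta\)-disc of radius comparable to \(r\), the \(C^\gamma\) bound gives oscillation \(O(\rho^\gamma)=O(r^{\gamma/\beta})\). The interior gradient estimate is then applied in each case.

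One minor correction: for \(\beta<1/6\) the \(\zeta\)-disc of radius \(r/2\) is not contained in the image of a single branch, which is a sector of opening \(2\pi\beta\). Either take radius \(c_\beta r\) with \(c_\beta\) small, or compose \(\varphi\) with \(z^1=(\beta\zeta)^{1/\beta}\) on the disc. The second option works because your argument never needs this map to be injective.
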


\begin{proof}
In \((\zeta,z')\), the complex Hessian of \(\varphi\) is uniformly bounded by
\eqref{eq:app-hessian-comparison}.  On a tangential slice, the restriction of
\(\varphi\) has bounded Laplacian and bounded oscillation on a ball of fixed
radius.  The interior gradient estimate for the Poisson equation therefore gives
\(|\partial_{z'}\varphi|\le C\).

Fix a point of \(B\setminus D\) and put \(r=|\zeta|\) there.  On the normal
slice, a disc of radius comparable to \(r\) has diameter comparable to
\(\rho=|z^1|\) in the original coordinate.  Proposition
\ref{prop:app-weak-potential} gives
\[
  \operatorname{osc}\varphi=O(\rho^\gamma)=O(r^{\gamma/\beta}).
\]
The same interior gradient estimate now yields
\[
  |\partial_\zeta\varphi|
  \le C\bigl(r^{\gamma/\beta-1}+r\bigr)
  \le Cr^{\gamma/\beta-1},
\]
where \(0<\gamma/\beta-1<1\).  This proves the lemma.
\end{proof}

\subsection{Chen--Wang regularity}

\begin{proposition}[Conical H\"older regularity]\label{prop:app-CW}
Under the hypotheses of Theorem~\ref{thm:appendix-polyhomogeneity}, the
potential \(\varphi\) belongs locally to \(C^{2,\alpha,\beta}\) for every
\begin{equation}\label{eq:app-alpha-range}
  0<\alpha<\min\left\{\frac1\beta-1,1\right\}.
\end{equation}
\end{proposition}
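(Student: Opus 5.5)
We will reduce Proposition~\ref{prop:app-CW} to the regularity theorem of Chen--Wang for weak conical Kähler--Einstein metrics \cite{ChenWang2017}, which upgrades a weak conical solution of a complex Monge--Ampère equation to $C^{2,\alpha,\beta}$ provided the right-hand side has the appropriate form. The first step is to rewrite \eqref{eq:app-KE} as a Monge--Ampère equation for the potential $\varphi$ of Proposition~\ref{prop:app-weak-potential}. On $B\setminus D$ the form $-\ii\partial\bar\partial\log\det(g_{i\bar j})$ is $\Ric(g)=\kappa\omega=\kappa\,\ii\partial\bar\partial\varphi$. Hence $\log\det(g_{i\bar j})+\kappa\varphi$ is pluriharmonic on $B\setminus D$. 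Writing $\det(g_{i\bar j})=|z^1|^{2\beta-2}e^{\psi}$, the metric comparison shows $\psi$ is bounded. So $\psi+\kappa\varphi$ is a bounded pluriharmonic function on the complement of a hypersurface, hence extends pluriharmonically across $D$ (Riemann extension for its local holomorphic primitive, or removability of bounded harmonic functions across a set of codimension two in real terms). We obtain
\[
  (\ii\partial\bar\partial\varphi)^n=|z^1|^{2\beta-2}e^{h-\kappa\varphi}\,\omega_{\mathrm{Euc}}^n
\]
with $h$ pluriharmonic and smooth across $D$.

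The second step is to check the hypotheses of Chen--Wang. By Proposition~\ref{prop:app-weak-potential}, $\omega$ is a weak conical Kähler metric with a plurisubharmonic potential in $C^{1,1,\beta}$ (Hölder continuous with Hessian comparable to $\omega_\beta$). The right-hand side has the form $e^{-\kappa\varphi+h}\omega_\beta^n$ up to a smooth positive factor. Their theorem requires this exponent to be conically Hölder, or to have bounded $g_\beta$-gradient. This is exactly what Lemma~\ref{lem:app-gradient} provides through \eqref{eq:app-gradient-bounded}. Its proof, with the Hölder estimate $\operatorname{osc}\varphi=O(r^{\gamma/\beta})$, also gives a conical Hölder modulus for $\varphi$ of exponent $\gamma/\beta-1$ in the cone coordinates.

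The third step is the application itself. Invoking the Chen--Wang regularity theorem yields $\varphi\in C^{2,\alpha,\beta}$ locally, for the range of $\alpha$ their theorem permits, namely $\alpha<\min\{1/\beta-1,1\}$; this restriction comes from the model cone's indicial roots. One point needs care: if their statement gives only some $\alpha$, a bootstrap is required. With $\varphi\in C^{2,\alpha,\beta}$, the right-hand side becomes $C^{2,\alpha,\beta}$-regular in the conical sense. Conical Schauder estimates for the linearized operator (Donaldson's theory, valid precisely for $\alpha<1/\beta-1$) then improve $\alpha$ to any value in \eqref{eq:app-alpha-range}.

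The main obstacle is matching the exact hypothesis format of \cite{ChenWang2017}. Their theorem is stated for a right-hand side $e^{-\kappa\varphi+h}$ with $h$ of a prescribed regularity relative to the cone, and for potentials in their weak class; the removable-singularity argument producing $h$ and the gradient bound are the pieces that must be arranged to fit. I would first verify that the bounded pluriharmonic extension is legitimate, and then that $|\partial\varphi|_{g_\beta}\le C$ is the precise condition Chen--Wang impose.
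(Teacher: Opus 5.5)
Your route is the paper's: rewrite $\Ric(g)=\kappa g$ as $\omega^n=e^{H-\kappa\varphi}\omega_\beta^n$, with $H$ bounded and pluriharmonic off $D$ and hence extending across $D$. Then feed Proposition~\ref{prop:app-weak-potential} and Lemma~\ref{lem:app-gradient} into \cite[Theorem~1.6]{ChenWang2017}.

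\textbf{The gap.} You leave unresolved, and misstate, the hypothesis on the right-hand side. What Chen--Wang impose, and what the paper verifies, is not that the exponent be conically H\"older or have bounded $g_\beta$-gradient. It is that the density $F=e^{\Theta}$, with $\Theta=H-\kappa\varphi$, itself lies in $C^{1,1,\beta}$. Besides being H\"older and smooth off $D$, it must therefore satisfy $-K\omega_\beta\le\ii\,\partial\bar\partial F\le K\omega_\beta$. The gradient bound \eqref{eq:app-gradient-bounded} does not give this by itself. What closes the argument is the identity
\[
  \ii\,\partial\bar\partial F
  =F\bigl(\ii\,\partial\bar\partial\Theta+\ii\,\partial\Theta\wedge\bar\partial\Theta\bigr)
  =F\bigl(-\kappa\,\omega+\ii\,\partial\Theta\wedge\bar\partial\Theta\bigr).
\]
Here the K\"ahler--Einstein equation is used a second time: $\ii\,\partial\bar\partial\Theta=-\kappa\omega$ is pinched between multiples of $\omega_\beta$ by the metric comparison. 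Lemma~\ref{lem:app-gradient} and the smoothness of $H$ then give $0\le\ii\,\partial\Theta\wedge\bar\partial\Theta\le|\partial\Theta|^2_{g_\beta}\,\omega_\beta\le C\omega_\beta$. This is exactly where the gradient lemma enters. Once you insert this computation, your argument goes through.

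\textbf{Two smaller points.} First, you should record that both sides of the Monge--Amp\`ere equation put no mass on $D$. The left side does not because $\varphi$ is a bounded plurisubharmonic function; the right side does not because $F$ is bounded and $\omega_\beta^n$ has an $L^1_{\mathrm{loc}}$ density. Hence the equation holds across $D$ in the sense of measures, as in \eqref{eq:app-MA}.

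Second, your bootstrap paragraph is unnecessary, since \cite[Theorem~1.6]{ChenWang2017} already yields every $\alpha$ in \eqref{eq:app-alpha-range}. As sketched, it would not raise the exponent anyway. The linearised operator at $\varphi$ has only $C^{\alpha,\beta}$ coefficients, and Schauder theory for such an operator returns the same exponent $\alpha$, not a larger one.
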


\begin{proof}
Set
\[
  h=\log\frac{\omega^n}{\omega_\beta^n}.
\]
The metric comparison makes \(h\) bounded.  Since \(\omega_\beta\) is
Ricci-flat away from \(D\), equation \eqref{eq:app-KE} gives
\[
  \ii\,\partial\bar\partial(h+\kappa\varphi)=0
\]
on \(B\setminus D\).  Thus \(H=h+\kappa\varphi\) is bounded and
pluriharmonic there.

The function \(H\) extends pluriharmonically across \(D\).  Indeed, on each
normal punctured disc it is bounded and harmonic, so the puncture is removable.
On a smaller product neighbourhood the extension is given by the Poisson formula
from a fixed normal circle.  Its boundary values are smooth in the tangential
variables, hence the extension is smooth jointly in \((z^1,z')\).  Since
\(\partial\bar\partial H=0\) off \(D\), it remains pluriharmonic across \(D\).
Both sides of the following identity put no mass on \(D\): for the left-hand
side this follows from the local boundedness of the plurisubharmonic potential,
and for the right-hand side it follows from the local integrability of
\(\omega_\beta^n\).  Consequently
\begin{equation}\label{eq:app-MA}
  \omega^n=e^{H-\kappa\varphi}\omega_\beta^n
\end{equation}
in the sense of measures.

Put \(F=e^{H-\kappa\varphi}\) and \(\Theta=H-\kappa\varphi\).  The function
\(F\) is positive, smooth away from \(D\), and H\"older continuous across it.
Since \(H\) is smooth, Lemma~\ref{lem:app-gradient} gives
\(|\partial\Theta|_{g_\beta}\le C\), while
\[
  \ii\,\partial\bar\partial\Theta=-\kappa\omega.
\]
It follows that
\[
  \ii\,\partial\bar\partial F
  =F\left(-\kappa\omega
    +\ii\,\partial\Theta\wedge\bar\partial\Theta\right),
\]
and hence
\[
  -C\omega_\beta\le
  \ii\,\partial\bar\partial F
  \le C\omega_\beta.
\]
Thus \(F\in C^{1,1,\beta}\) in the sense of
\cite[Definition~1.3]{ChenWang2017}.  Proposition
\ref{prop:app-weak-potential} gives the same membership for \(\varphi\), and
\(\ii\,\partial\bar\partial\varphi\) is a weak conical metric.  Applying
\cite[Theorem~1.6]{ChenWang2017} to \eqref{eq:app-MA} proves
the proposition for every \(\alpha\) satisfying \eqref{eq:app-alpha-range}.
Notice that this formulation applies to either sign of \(\kappa\).
\end{proof}

\subsection{The polyhomogeneous expansion}

In holomorphic coordinates, equation \eqref{eq:app-MA} becomes, up to an
inessential positive constant depending only on the normalisation of
\(\omega_\beta\),
\begin{equation}\label{eq:app-YZ-equation}
  \det(\varphi_{i\bar j})
  =\frac{e^{-\kappa\varphi+H}}{|z^1|^{2-2\beta}}.
\end{equation}
The function \(H\) is smooth, Proposition~\ref{prop:app-CW} gives
\(\varphi\in C^{2,\alpha}_\beta\) in Donaldson's notation
\cite{Donaldson2012}, and the metric comparison is unchanged.  The hypotheses of
\cite[Theorem~1.1]{YinZheng2019} therefore hold with
\(\lambda=-\kappa\) and smooth term \(H\).

Yin--Zheng prove that, to every finite order, \(\varphi\) is a finite linear
combination, with coefficients depending smoothly on \(z'\), of
\begin{equation}\label{eq:app-YZ-terms}
  r^{2j+k/\beta}(\log r)^m\cos(l\theta),
  \qquad
  r^{2j+k/\beta}(\log r)^m\sin(l\theta),
\end{equation}
where
\[
  j,k,l,m\in\N\cup\{0\},
  \qquad
  \frac{k-l}{2}\in\N\cup\{0\},
  \qquad
  m\le\max\{0,k-1\}.
\]
Although \cite[Theorem~1.1]{YinZheng2019} is stated for each fixed tangential
parameter, the estimates in its proof are locally uniform in that parameter.  In
particular, \cite[Theorem~3.1]{YinZheng2019} gives uniform bounds for arbitrary
tangential derivatives on a smaller product neighbourhood, and the subsequent
expansion argument uses these estimates.  Hence the conormal remainder estimates
are locally uniform in \(z'\), and the same expansions hold after arbitrary
tangential differentiation.

For completeness, we relate this conclusion precisely to
Definition~\ref{def:polyhomogeneous}.  Below any fixed order there are only
finitely many exponents \(2j+k/\beta\).  At a fixed exponent, Fourier projection
in \(\theta\) and recursive extraction of the logarithmic coefficients makes the
coefficients unique.  Applying the same extraction after arbitrary tangential
differentiation shows that these coefficients are smooth functions of \(z'\) and
that tangential differentiation may be performed term by term.  Together with
the conormal remainder estimates of \cite[Theorems~1.1 and~3.1]{YinZheng2019},
this is the expansion of Definition~\ref{def:phg}.

We now verify the two additional requirements in
Definition~\ref{def:polyhomogeneous}.  The terms with \(j=k=0\) have
\(l=m=0\) and combine to give \(\varphi_0(z')\).  Every other exponent satisfies
\[
  2j+\frac{k}{\beta}\ge
  \begin{cases}
    2,&j\ge1,\\
    1/\beta,&j=0,\ k\ge1,
  \end{cases}
\]
and is therefore strictly greater than \(1\).  This gives
\eqref{eq:polyhomogeneous} with \(\sigma_1>1\).

It remains to consider the limit of the metric matrix \(G\) in cone coordinates.
The only terms of order less than \(2\) that can occur are, when
\(1/\beta<2\), linear combinations of
\[
  r^{1/\beta}\cos\theta,
  \qquad
  r^{1/\beta}\sin\theta.
\]
On a sector these are constant multiples of the real and imaginary parts of
\(z^1=(\beta\zeta)^{1/\beta}\).  Their normal complex Hessian vanishes, their
mixed derivatives are \(O(r^{1/\beta-1})\), and their tangential second
derivatives are \(O(r^{1/\beta})\); hence none contributes to the limit of
\(G\).  The same conclusion holds for the possible order-two angular term when
\(\beta=1/2\).

The non-angular term of order \(2\) is
\(a(z')r^2=a(z')|\zeta|^2\); its normal complex Hessian has the
\(\theta\)-independent limit \(a(z')\), while its mixed and tangential components
tend to zero.  The term \(\varphi_0(z')\) supplies the limiting tangential block.
Every remaining term has order strictly greater than \(2\), so after two normal
derivatives it is \(o(1)\), including all of its logarithmic factors.  The
conormal remainder estimates make these conclusions uniform in \(\theta\).
Therefore
\[
  G\longrightarrow G_0(z')
  \qquad\text{as }r\to0,
\]
which is \eqref{eq:G-limit}.  Both additional requirements of
Definition~\ref{def:polyhomogeneous} are now verified, and the proof of
Theorem~\ref{thm:appendix-polyhomogeneity} is complete.

\bibliographystyle{alpha}
\bibliography{refs}

\begin{thebibliography}{JMR16}

\bibitem[Boc47]{Bochner1947}
Salomon Bochner.
\newblock Curvature in {H}ermitian metric.
\newblock {\em Bulletin of the American Mathematical Society}, 53:179--195,
  1947.

\bibitem[CW17]{ChenWang2017}
Xiuxiong Chen and Yuanqi Wang.
\newblock On the regularity problem of complex {Monge--Amp\`ere} equations with
  conical singularities.
\newblock {\em Annales de l'Institut Fourier}, 67(3):969--1003, 2017.

\bibitem[dB25]{deBorbon2025}
Martin de~Borbon.
\newblock Local classification of {K\"ahler} metrics with constant holomorphic
  sectional curvature.
\newblock {\em The Quarterly Journal of Mathematics}, 76(4):1387--1398, 2025.

\bibitem[dBP21]{deBorbonPanov2021}
Martin de~Borbon and Dmitri Panov.
\newblock Polyhedral {K}\"ahler cone metrics on {$\mathbb{C}^n$} singular at
  hyperplane arrangements.
\newblock {\em arXiv preprint}, arXiv:2106.13224, 2021.

\bibitem[dBP22]{deBorbonPanov2022}
Martin de~Borbon and Dmitri Panov.
\newblock Parabolic bundles and spherical metrics.
\newblock {\em Proceedings of the American Mathematical Society},
  150(12):5459--5472, 2022.

\bibitem[Del70]{Deligne1970}
Pierre Deligne.
\newblock {\em \'{E}quations diff\'{e}rentielles \`a points singuliers
  r\'{e}guliers}, volume 163 of {\em Lecture Notes in Mathematics}.
\newblock Springer-Verlag, Berlin, 1970.

\bibitem[Dem12]{Demailly2012}
Jean-Pierre Demailly.
\newblock {\em Complex Analytic and Differential Geometry}.
\newblock Institut Fourier, Universit\'{e} Grenoble Alpes, 2012.
\newblock OpenContent book.

\bibitem[DG23]{DeroinGuillot2023}
Bertrand Deroin and Adolfo Guillot.
\newblock Foliated affine and projective structures.
\newblock {\em Compositio Mathematica}, 159(6):1153--1187, 2023.

\bibitem[Don12]{Donaldson2012}
Simon~K. Donaldson.
\newblock K\"ahler metrics with cone singularities along a divisor.
\newblock In {\em Essays in Mathematics and its Applications}, pages 49--79.
  Springer, Heidelberg, 2012.

\bibitem[JMR16]{JMR2016}
Thalia~D. Jeffres, Rafe Mazzeo, and Yanir~A. Rubinstein.
\newblock K\"ahler--{E}instein metrics with edge singularities.
\newblock {\em Annals of Mathematics}, 183(1):95--176, 2016.

\bibitem[KZ18]{KellerZheng2018}
Julien Keller and Kai Zheng.
\newblock Construction of constant scalar curvature {K}\"ahler cone metrics.
\newblock {\em Proceedings of the London Mathematical Society (3)},
  117(3):527--573, 2018.

\bibitem[NY04]{NovikovYakovenko2004}
Dmitry Novikov and Sergei Yakovenko.
\newblock Lectures on meromorphic flat connections.
\newblock In {\em Normal Forms, Bifurcations and Finiteness Problems in
  Differential Equations}, volume 137 of {\em NATO Science Series II:
  Mathematics, Physics and Chemistry}, pages 387--430. Kluwer Academic
  Publishers, Dordrecht, 2004.

\bibitem[SW16]{SongWang2016}
Jian Song and Xiaowei Wang.
\newblock The greatest {R}icci lower bound, conical {E}instein metrics and
  {C}hern number inequality.
\newblock {\em Geometry \& Topology}, 20(1):49--102, 2016.

\bibitem[Tro86]{Troyanov1986}
Marc Troyanov.
\newblock Les surfaces euclidiennes \`a singularit\'{e}s coniques.
\newblock {\em L'Enseignement Math\'{e}matique}, 32(1--2):79--94, 1986.

\bibitem[YZ19]{YinZheng2019}
Hao Yin and Kai Zheng.
\newblock Expansion formula for complex {Monge--Amp\`ere} equation along cone
  singularities.
\newblock {\em Calculus of Variations and Partial Differential Equations},
  58(2):Paper No. 50, 32 pp., 2019.

\end{thebibliography}

\bigskip
\noindent\textsc{Loughborough University, Loughborough LE11 3TU, UK}\par
\noindent\textit{Email address}: \texttt{m.de-borbon@lboro.ac.uk}

\end{document}